\documentclass{article}

\usepackage{amsmath,amssymb,amsthm,mathrsfs}
\usepackage[overload]{empheq}
\usepackage{graphicx}
\usepackage[colorlinks=true]{hyperref}
\usepackage{pdfsync}
\usepackage{tikz}
\usetikzlibrary{decorations.pathreplacing}
\usetikzlibrary{patterns}

\topmargin -1cm
\textheight 21cm
\textwidth 15cm
\oddsidemargin 1cm

\setlength{\parindent}{0pt}
\setlength{\parskip}{1ex plus 0.5ex minus 0.2ex}

\newtheorem{theorem}{Theorem}
\newtheorem{proposition}{Proposition}
\newtheorem{lemma}{Lemma}
\newtheorem{corollary}{Corollary}
\theoremstyle{definition}
\theoremstyle{definition}
\theoremstyle{definition}\newtheorem{remark}{Remark}


\newcommand{\vertiii}[1]{{\left\vert\kern-0.15ex\left\vert\kern-0.15ex\left\vert #1
    \right\vert\kern-0.15ex\right\vert\kern-0.15ex\right\vert}}

\def\R{\mathbb{R}}
\def\N{\mathbb{N}}

\def\di{\displaystyle}
\def\;{\,}

\def\C{\mathcal{C}}
\def\AC{\mathcal{AC}}
\def\CC{\mathrm{C}}

\def\t{\tau}

\def\L{\mathrm{L}}

\def\E{\mathrm{E}}

\def\RR{\mathrm{R}}

\def\PPP{\mathscr{P}_\E}
\def\QQQ{\mathscr{Q}_\E}

\def\ZB{{\mathrm{ZB}}}
\def\ZO{{\mathrm{Z}\Omega}}
\def\ZWZ{{\mathrm{ZWZ}}}
\def\ZBWZB{{\mathrm{ZBWZB}}}
\def\ZBWZ{{\mathrm{ZB {W}Z}}}
\def\ZBWZOX{{\mathrm{ZB {W}Z}\Omega\mathrm{X}}}
\def\ZWZOX{{\mathrm{Z {W}Z}\Omega\mathrm{X}}}
\def\WZOX{{\mathrm{WZ}\Omega\mathrm{X}}}
\def\RR{{\mathrm{R}}}
\def\RV{{\mathrm{RV}}}
\def\PP{{\mathrm{P}}}
\def\QQ{{\mathrm{Q}}}
\def\TT{{\mathrm{T}}}
\def\FF{{\mathrm{F}}}
\def\GG{{\mathrm{G}}}
\def\HH{{\mathrm{H}}}
\def\KK{{\mathrm{K}}}
\def\JJ{{\mathrm{J}}}
\def\YY{{\mathrm{Y}}}


\title{Linear-quadratic optimal sampled-data control problems: convergence result and Riccati theory}

\author{Lo\"ic Bourdin\footnote{Universit\'e de Limoges, Institut de recherche XLIM, D\'epartement de Math\'ematiques et d'Informatique. UMR CNRS 7252. Limoges, France (\texttt{loic.bourdin@unilim.fr}).}
\and
Emmanuel Tr\'elat\footnote{Sorbonne Universit\'es, UPMC Univ Paris 06, CNRS UMR 7598, Laboratoire Jacques-Louis Lions, Institut Universitaire de France, F-75005, Paris, France (\texttt{emmanuel.trelat@upmc.fr}).}
}

\date{}

\begin{document}

\maketitle

\begin{abstract}
We consider a general linear control system and a general quadratic cost, where the state evolves continuously in time and the control is sampled, \textit{i.e.}, is piecewise constant over a subdivision of the time interval. This is the framework of a linear-quadratic optimal sampled-data control problem. As a first result, we prove that, as the sampling periods tend to zero, the optimal sampled-data controls converge pointwise to the optimal permanent control. 
Then, we extend the classical Riccati theory to the sampled-data control framework, by developing two different approaches: the first one uses a recently established version of the Pontryagin maximum principle for optimal sampled-data control problems, and the second one uses an adequate version of the dynamic programming principle.
In turn, we obtain a closed-loop expression for optimal sampled-data controls of linear-quadratic problems. 
\end{abstract}

\bigskip

\noindent\textbf{Keywords:} optimal control; linear-quadratic problems; sampled-data control; digital control; convergence; Pontryagin maximum principle; Riccati theory; dynamic programming principle.

\bigskip

\noindent\textbf{AMS Classification:} 49J15; 93C57, 93C62.

\section{Introduction}\label{section1}
Optimal control theory is concerned with the analysis of controlled dynamical systems, where one aims at steering such a system from a given configuration to some desired target by minimizing some criterion. The Pontryagin Maximum Principle (in short, PMP), established at the end of the 50's for general nonlinear continuous-time dynamics (see \cite{pont}, and see \cite{gamk} for the history of this discovery), is certainly the milestone of the classical optimal control theory. It provides a first-order necessary condition for optimality, by asserting that any optimal trajectory must be the projection of an extremal. The PMP reduces the search of optimal trajectories to a boundary value problem over the set of extremals. Optimal control theory, and in particular the PMP, has an immense field of applications in various domains (see \cite{agrach,Bon-Chy03,trel2,bres,brys,BulloLewis,hest,lee,Schattler,seth,trel} and references therein).

The classical version of the PMP that can be found in \cite{pont} is concerned with optimal \textit{permanent control} problems, that is, when the control can be modified at any instant of time. In many problems, achieving the optimal trajectory requires a permanent modification of the control. However, such a requirement is not conceivable in practice for human beings, even for mechanical or numerical devices. Therefore, \textit{sampled-data controls} or \textit{digital controls}, for which only a finite number of modifications is allowed, are usually considered for engineering issues. The fixed switching times at which (and only at which) the sampled-data controls can be modified are usually called \textit{controlling times} or \textit{sampling times}. In \textit{optimal sampled-data control problems}, the control evolves in discrete time and the state evolves in continuous time. The situation differs from what is usually called \textit{discrete-time optimal control problem}, where both the state and the control evolve in discrete time. Some versions of the PMP for discrete-time optimal control problems can be found in the literature (see, e.g., \cite{bolt,Halkin,holt2,holt,mord,seth}).

Since the 60's, an extensive literature deals with sampled-data (or digital) control systems, as evidenced by numerous references and books (see, e.g., \cite{acker,acker2,astu,fada,Iser,land,nesi,nesi2,raga,tou} and references therein), and it is still an active research topic. A significant part of the literature is concerned with $\mathcal{H}_2$-$\mathcal{H}_\infty$ optimization theory (see, e.g., \cite{bami,chen,gero,khar,mirk,souz,toiv}), but many numerical and theoretical treatments of other optimization criteria are also addressed (see, e.g., \cite{aida,azhm,bini,bini2,imur,imur2}).

In this paper we establish some new related results with, to the best of our knowledge, a novel approach based on a recently established version of the PMP that can be applied to optimal sampled-data control problems. We refer to \cite{bourdin-trelat-pontryagin2} (see also~\cite{bourdin-trelat-pontryagin3}) for this PMP.\footnote{Actually in \cite{bourdin-trelat-pontryagin2} one can find a PMP for general nonlinear optimal nonpermanent control problems settled on {\em time scales}, which unifies and extends continuous-time and discrete-time issues (see also \cite{bourdin-trelat-cauchy,bourdin-trelat-pontryagin1}).} Our main results in this paper, Theorems~\ref{thmmainresult} and~\ref{thmmainresult2}, are based on the application of this PMP to the particular framework of Linear-Quadratic Optimal Control Problems (in short, LQOCPs).

LQOCPs are widely studied in the literature (see, e.g., \cite{brys,kwak,lee}) and used in practice. Indeed, considering a quadratic cost functional is very usual and natural, for instance in order to minimize distances to nominal trajectories in tracking problems and, even if dynamical systems are nonlinear in general, linearized systems are frequently considered, for instance for stabilization issues. 
The application of the PMP of \cite{bourdin-trelat-pontryagin2} to a LQOCP with sampled-data controls yields an optimal sampled-data control expressed as a function of the costate, thus, as an \textit{open-loop control} (see Proposition~\ref{proppmpsample} in Section~\ref{section2}).

Then, two questions naturally arise.
\begin{enumerate}
\item The first question concerns the (pointwise) convergence of optimal sampled-data controls to the optimal permanent control as the distances between consecutive sampling times tend to zero. We give in Theorem~\ref{thmmainresult} (Section~\ref{section2}) a positive answer to this question. At this stage, optimal sampled-data controls are still expressed as open-loop controls.
\item The second issue concerns the expression of optimal sampled-data controls as \textit{feedbacks}, \textit{i.e.}, as \textit{closed-loop controls}. We provide in Theorem~\ref{thmmainresult2} (Section~\ref{section3}) such an expression. Two different proofs of Theorem~\ref{thmmainresult2} are given, based respectively on the PMP of \cite{bourdin-trelat-pontryagin2} (see Section~\ref{secfirstproof}) and on an adequate version of the dynamic programming principle (see Section~\ref{secsecondproof}):

\begin{enumerate}
\item For continuous LQOCPs with permanent controls, it is well known how to pass from the open-loop optimal (permanent) control coming from the classical PMP to a closed-loop form expressed in terms of the Riccati matrix, at the price of solving the so-called Riccati matrix differential equation (see \cite{brys,kalm,lee,Schattler,seth,trel}). In Section~\ref{secfirstproof} the proof of Theorem~\ref{thmmainresult2} is derived from the open-loop form (Proposition~\ref{proppmpsample}) resulting from the PMP of~\cite{bourdin-trelat-pontryagin2}. 
\item It is well known that optimal controls of discrete LQOCPs can be expressed as closed-loop controls using the dynamic programming principle (see \cite{bell}), and moreover can be explicitly computed in a recursive way, requiring in particular to solve an explicit discrete Riccati matrix differences equation. Considering state-transition matrices, a general (continuous) LQOCP with sampled-data controls can be written as a discrete LQOCP (see, e.g., \cite[p.~445]{kwak}). This is the point of view adopted in~\cite{bini,bini2} where the authors solve explicitly LQOCPs with sampled-data controls, in a recursive way, for autonomous and homogeneous problems. The proof of Theorem~\ref{thmmainresult2} that we give in Section~\ref{secsecondproof} actually provides an extension of the strategy proposed in~\cite{bini} to the nonautonomous and nonhomogeneous case. Our method is based on the dynamic programming principle but it is not required to write the LQOCP with sampled-data controls as a discrete LQOCP.
\end{enumerate}
These two different approaches lead, in accordance, to Theorem~\ref{thmmainresult2}, and thus complete the Riccati theory for LQOCPs with sampled-data controls. Moreover, as in \cite{bini}, it can be derived from Theorem~\ref{thmmainresult2} a recursive way to compute explicitly the optimal sampled-data controls of LQOCPs (see Corollary~\ref{corollary} in Section~\ref{section3}). 
\end{enumerate}
In turn, combining Theorem~\ref{thmmainresult} and Corollary~\ref{thmmainresult}, we provide in this paper a strategy in order to compute explicitly pointwise convergent approximations of optimal permanent controls of LQOCPs. We provide in Section~\ref{section4} some illustrating numerical simulations.

\section{Pointwise convergence of optimal sampled-data controls}\label{section2}
We first introduce some notations available throughout the paper. Let $m$ and $n$ be two nonzero integers. We denote by $\langle \cdot , \cdot \rangle_m$ (resp., $\langle \cdot , \cdot \rangle_n$) and $\Vert \cdot \Vert_m$ (resp., $\Vert \cdot \Vert_n$) the usual scalar product and the usual Euclidean norm of $\R^m$ (resp., $\R^n$).

Let $a < b$ be two real numbers. We denote by $\C := \C ([a,b],\R^n)$ (resp., $\AC := \AC ([a,b],\R^n)$) the classical space of continuous functions (resp., absolutely continuous functions). We endow $\C$ with its usual uniform norm $\Vert \cdot \Vert_{\infty}$. The convergence of a sequence $(q_k)_{k \in \N}$ to some $q$ in $\C$ (for the corresponding usual strong topology of $\C$) will be denoted by $q_k \to q$.

We denote by $\L^2 : = \L^2 ([a,b],\R^m)$ the classical Lebesgue space of square-integrable functions, endowed with its usual norm $\Vert \cdot \Vert_{\L^2}$. The strong convergence (resp., weak convergence) of a sequence $(u_k)_{k \in \N}$ to some $u$ in $\L^2$ will be denoted by $u_k \to u$ (resp., $ u_k \rightharpoonup u$).

We denote by $\vertiii{\cdot}$ the induced norm for matrices in $\R^{n,n}$, $\R^{n,m}$, $\R^{m,n}$ and $\R^{m,m}$. If $M=M(\cdot)$ is a continuous matrix defined on $[a,b]$, we denote by $\vertiii{M}_\infty$ its uniform norm. Finally, we denote by $M^\top$ the transpose of a matrix $M$.

\begin{remark}\label{rempositivematrix2}
If a square matrix $M$ is positive-semidefinite, then $M_0^\top M M_0$ is positive-semidefinite as well for any matrix $M_0$ having as many rows as $M$.
\end{remark}

\subsection{Preliminaries on LQOCPs}\label{secprelim}
Let $\E$ be a non-empty subset of $\L^2$. We consider the general LQOCP 
\begin{equation}\label{theproblem}\tag{$\PPP$}
\begin{array}{ll}
\text{minimize} & \overline{\CC}(q,u), \\
& \\
\text{subject to} & q \in \AC, \quad u \in \E \subset \L^2, \\[5pt]
& \dot q(t) = A(t) q(t) + B(t) u(t)+\omega(t), \quad \text{a.e. $t \in [a,b]$,}  \\[3pt]
& q(a)=q_a ,
\end{array}
\end{equation}
with
\begin{multline*}
\overline{\CC}(q,u) := \dfrac{1}{2} \Big\langle S \big( q(b)-q_b \big) , q(b)-q_b \Big\rangle_n \\[5pt]
+ \dfrac{1}{2} \di \int_a^{b} \Big\langle W(\t) \big( q(\t)-x(\t) \big) , q(\t) -x(\t) \Big\rangle_n + \Big\langle R(\t) \big( u(\t)-v(\t) \big) , u(\t)-v(\t) \Big\rangle_m \; d\t ,
\end{multline*}
where $q_a$, $q_b \in \R^n$, $S \in \R^{n,n}$, $x$, $\omega : [a,b] \to \R^n$ and $v : [a,b] \to \R^m$ are continuous functions, and $A : [a,b] \rightarrow \R^{n,n}$, $B : [a,b] \rightarrow \R^{n,m}$, $W : [a,b] \rightarrow \R^{n,n}$ and $R : [a,b] \rightarrow \R^{m,m}$ are continuous matrices (see Remark~\ref{assumptionsweakened} for weakened regularity assumptions).

We assume that $S$ is positive-semidefinite and that $W(t)$ and $R(t)$ are respectively positive-semidefinite and positive-definite for every $t \in [a,b]$. In particular, note that $\overline{\CC}(q,u) \geq 0$ for every $(q,u) \in \AC \times \L^2$.

Since $\langle S y , y \rangle_n = \langle \frac{1}{2} (S+S^\top) y , y \rangle_n$ for any $y \in \R^n$, we assume, without loss of generality, that $S$ is symmetric. For the same reason, we assume that $W(t)$ and $R(t)$ are symmetric for every $t \in [a,b]$.

\begin{remark}
In the case where the matrices $A$, $B$, $W$, $R$ and the functions $\omega$, $x$, $v$ are constant, Problem~\eqref{theproblem} is said to be \textit{autonomous}.
\end{remark}

\begin{remark}
In the case where $q_b = x(t) = \omega (t) = 0_{\R^n}$ and $v(t)= 0_{\R^m}$ for every $t \in [a,b]$, Problem~\eqref{theproblem} is said to be \textit{homogeneous}.
\end{remark}

The set $\E$ is used to model constraints on the set of \textit{sampling times} at which the value of the control can be modified. More precisely, we consider:
\begin{itemize}
\item[-] either \textit{permanent controls}, and then $\E = \L^2$ (no constraint). In this case, the value of the control $u$ can be modified at any time $t \in [a,b)$ and Problem~\eqref{theproblem} is said to be a general LQOCP with permanent controls.
\item[-] either \textit{sampled-data controls}, and then $\E$ is a set of piecewise constant controls with a fixed and finite number of switching times (see Section~\ref{section23}). In this case, we speak of \textit{nonpermanent controls} because the value of the control $u$ cannot be modified at any time, but only at fixed {sampling times}. More precisely we speak of \textit{sampled-data controls} because the value of the control $u$ is \textit{frozen} along each time interval between two consecutive sampling times (\textit{sample-and-hold procedure}). Problem~\eqref{theproblem} is said to be a general LQOCP with sampled-data controls.
\end{itemize}
The first objective of this paper is to prove that the optimal sampled-data controls converge pointwise to the optimal permanent control, when the distances between consecutive sampling times tend to zero (see Theorem~\ref{thmmainresult} in Section~\ref{section23}).

Before coming to that point, we first recall hereafter a series of well known results for Problem~\eqref{theproblem} (see, e.g., \cite{brys,lee,Schattler,trel}). Firstly the classical Cauchy-Lipschitz theorem leads to the two following results.

\begin{lemma}\label{propcauchyq}
For every $u \in \L^2$, there exists a unique solution $q \in \AC$ of the linear Cauchy problem given by
\begin{equation*}
\left\{\begin{split}
& \dot q(t) = A(t) q(t) + B(t) u(t)+\omega(t), \quad \text{a.e. $t \in [a,b]$,}  \\
& q(a)=q_a .
\end{split}\right.
\end{equation*}
We denote this solution by $q(\cdot,u)$.
\end{lemma}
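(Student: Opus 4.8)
The statement to be proved is Lemma~\ref{propcauchyq}, which asserts existence and uniqueness in $\AC$ of the solution to the linear Cauchy problem
\[
\dot q(t) = A(t) q(t) + B(t) u(t) + \omega(t), \qquad q(a) = q_a,
\]
for any fixed $u \in \L^2$. This is a standard application of the Carathéodory/Cauchy--Lipschitz theory for linear ODEs with a right-hand side that is merely integrable in time, so the plan is essentially to verify the hypotheses of that theorem.

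\textbf{Plan.} Fix $u \in \L^2$ and set $f(t,q) := A(t) q + B(t) u(t) + \omega(t)$ for a.e.\ $t \in [a,b]$ and all $q \in \R^n$. First I would observe that $f(\cdot, q)$ is measurable on $[a,b]$ for each fixed $q$ (it is a sum of products of continuous and measurable functions) and that $f(t,\cdot)$ is affine, hence continuous, for a.e.\ $t$. Next I would establish the global Lipschitz and sublinear-growth bounds: since $A$ is continuous on the compact interval $[a,b]$, it is bounded, say $\vertiii{A}_\infty =: L < \infty$, so $\Vert f(t,q_1) - f(t,q_2) \Vert_n \le L \Vert q_1 - q_2 \Vert_n$ uniformly in $t$; and $\Vert f(t,q) \Vert_n \le L \Vert q \Vert_n + c(t)$ with $c(t) := \vertiii{B}_\infty \Vert u(t) \Vert_m + \Vert \omega \Vert_\infty$, which belongs to $\L^1([a,b],\R)$ because $u \in \L^2([a,b],\R^m) \subset \L^1([a,b],\R^m)$ (finite interval) and $\omega$ is continuous hence bounded. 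These are exactly the structural assumptions under which the Carathéodory existence theorem, together with the global Lipschitz condition, yields a unique solution.

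\textbf{Existence and uniqueness.} I would then apply the classical Carathéodory--Cauchy--Lipschitz theorem: under the measurability in $t$, continuity in $q$, global Lipschitz constant $L$, and $\L^1$ growth bound just verified, there exists a unique absolutely continuous $q : [a,b] \to \R^n$ satisfying $q(a) = q_a$ and $\dot q(t) = f(t, q(t))$ for a.e.\ $t$; equivalently $q(t) = q_a + \int_a^t f(\tau, q(\tau))\, d\tau$ for all $t \in [a,b]$. Alternatively, if one prefers an explicit argument, let $\Phi(\cdot)$ denote the state-transition matrix of $\dot \Phi = A(t)\Phi$, $\Phi(a) = I_n$ (which exists and is absolutely continuous, indeed $\CC^1$ since $A$ is continuous), and verify directly by differentiation that $q(t) := \Phi(t) q_a + \Phi(t)\int_a^t \Phi(\tau)^{-1}\big(B(\tau) u(\tau) + \omega(\tau)\big)\, d\tau$ is well-defined (the integrand is in $\L^1$), absolutely continuous, and solves the Cauchy problem; uniqueness then follows from Grönwall's inequality applied to the difference of two solutions using the Lipschitz bound $L$. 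Either route gives the claimed solution $q(\cdot, u)$.

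\textbf{Main obstacle.} There is no serious obstacle here: the only mild subtlety is that $u$ is only square-integrable, not continuous, so one must invoke the Carathéodory version of the existence/uniqueness theorem rather than the textbook continuous-data version, and note that $\L^2 \subset \L^1$ on the bounded interval $[a,b]$ so that the forcing term $B(\cdot)u(\cdot) + \omega(\cdot)$ is genuinely in $\L^1$, guaranteeing absolute continuity of $q$. Everything else is routine verification of hypotheses.
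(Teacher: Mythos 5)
Your proposal is correct and matches the paper's approach: the paper simply attributes Lemma~\ref{propcauchyq} to the classical Cauchy--Lipschitz (Carath\'eodory) theorem, and your verification of measurability, the global Lipschitz bound via $\vertiii{A}_\infty$, and the $\L^1$ forcing term (using $\L^2 \subset \L^1$ on $[a,b]$) is exactly the routine check that invocation requires. Your alternative explicit construction via the state-transition matrix is also consistent with the Duhamel formula the paper uses later.
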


\begin{lemma}\label{propcauchyp}
For every $u \in \L^2$, there exists a unique solution $p \in \AC$ of the backward linear Cauchy problem given by
\begin{equation*}
\left\{\begin{split}
& \dot p(t) = - A(t)^\top p(t) +W(t) \big( q(t,u)-x(t) \big), \quad \text{a.e. $t \in [a,b]$,}  \\
& p(b)= - S \big( q(b,u) -q_b \big).
\end{split}\right.
\end{equation*}
We denote this solution by $p(\cdot,u)$.
\end{lemma}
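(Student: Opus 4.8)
The plan is to reduce the backward Cauchy problem to a standard \emph{forward} linear Cauchy problem and then to invoke the classical Cauchy--Lipschitz theorem, exactly as one does for Lemma~\ref{propcauchyq}. The key preliminary observation is that the source term is continuous: by Lemma~\ref{propcauchyq} the function $q(\cdot,u)$ belongs to $\AC \subset \C$, and hence $t \mapsto W(t)\big(q(t,u)-x(t)\big)$, being the product of the continuous matrix $W$ with the continuous function $q(\cdot,u)-x$, is continuous on $[a,b]$. In particular it is integrable, so the right-hand side of the equation for $p$ is a bona fide Carathéodory (here even continuous) field.

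Accordingly, set $F(t,y) := -A(t)^\top y + W(t)\big(q(t,u)-x(t)\big)$ for $(t,y) \in [a,b]\times\R^n$. Then $F$ is continuous in $t$ for each fixed $y$, affine in $y$, and globally Lipschitz continuous in $y$ uniformly in $t$ with Lipschitz constant $\vertiii{A}_\infty$; in particular $F$ satisfies a global linear growth condition in $y$. Performing the time reversal $s := a+b-t$ and setting $\widetilde p(s) := p(a+b-s)$, the backward problem for $p$ is equivalent to the forward Cauchy problem
\[
\dot{\widetilde p}(s) = -F\big(a+b-s,\widetilde p(s)\big) \quad \text{for a.e. } s \in [a,b], \qquad \widetilde p(a) = -S\big(q(b,u)-q_b\big),
\]
whose right-hand side inherits from $F$ continuity in $s$ together with global Lipschitz continuity and linear growth in the state variable. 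The classical Cauchy--Lipschitz theorem (with global existence on the whole interval, thanks to the linear growth) yields a unique solution $\widetilde p \in \AC$ on $[a,b]$, and undoing the change of variable produces the unique solution $p \in \AC$ of the original backward problem, which we denote $p(\cdot,u)$.

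I do not anticipate any real obstacle here; the only point deserving (minimal) attention is to check that the source term has enough regularity, which is exactly what Lemma~\ref{propcauchyq} supplies. As an alternative one may bypass the abstract theorem altogether: letting $\Phi(\cdot,\cdot)$ denote the state-transition matrix associated with $-A^\top$ (that is, $\partial_t \Phi(t,s) = -A(t)^\top \Phi(t,s)$ and $\Phi(s,s) = I_n$), the variation of constants formula shows that
\[
p(t) = -\Phi(t,b)\, S\big(q(b,u)-q_b\big) + \int_b^t \Phi(t,s)\, W(s)\big(q(s,u)-x(s)\big)\, ds
\]
is the unique element of $\AC$ solving the problem; this explicit expression is in any case the convenient starting point for later analysing the dependence of $p(\cdot,u)$ on $u$.
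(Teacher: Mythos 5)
Your proposal is correct and follows exactly the route the paper intends: the paper offers no separate proof of this lemma, simply stating that it (together with Lemma~\ref{propcauchyq}) "follows from the classical Cauchy--Lipschitz theorem," which is precisely what you carry out after the (correct) observation that $q(\cdot,u)$ from Lemma~\ref{propcauchyq} makes the source term continuous. Your time-reversal reduction and the explicit variation-of-constants formula are both valid fillings-in of that one-line argument.
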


It clearly follows from Lemma~\ref{propcauchyq} that Problem~\eqref{theproblem} can be reduced to the minimization problem
$$ \min_{u \in \E} \; \CC(u), $$
where the cost functional $\CC : \L^2 \to \R^+$ is defined by $\CC(u) := \overline{\CC} ( q(\cdot ,u) ,u )$. For the reader's convenience, the proof of the following claim is recalled in Section~\ref{annexeproof}.

\begin{proposition}\label{thmexistenceunicitepermanent}
If $\E$ is a non-empty weakly closed convex subset of $\L^2$, then Problem~\eqref{theproblem} has a unique solution, denoted by $u^*_\E$.
\end{proposition}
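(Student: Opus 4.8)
The plan is to prove existence and uniqueness by the direct method of the calculus of variations, combined with strict convexity of the cost functional $\CC$. First I would establish that $\CC$ is convex on $\L^2$. Writing $q(\cdot,u) = q_L(\cdot,u) + q_0$, where $u \mapsto q_L(\cdot,u)$ is the linear map sending $u$ to the solution of the Cauchy problem of Lemma~\ref{propcauchyq} with $q_a = 0$ and $\omega = 0$, and $q_0 = q(\cdot,0)$ with $u \equiv 0$, one sees that $u \mapsto q(\cdot,u)$ is affine and continuous from $\L^2$ into $\C$ (continuity follows from a standard Gr\"onwall estimate). Hence $\CC(u) = \overline{\CC}(q(\cdot,u),u)$ is a composition of a continuous affine map with the quadratic form $\overline{\CC}$, whose quadratic part is $\frac12\langle S\,\xi(b),\xi(b)\rangle_n + \frac12\int_a^b \langle W(\t)\xi(\t),\xi(\t)\rangle_n + \langle R(\t)\eta(\t),\eta(\t)\rangle_m\,d\t$, which is nonnegative because $S$, $W(t)$ are positive-semidefinite and $R(t)$ is positive-definite. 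Therefore $\CC$ is convex; moreover, since $R(t) \geq \rho\, I_m$ for some $\rho > 0$ (by compactness of $[a,b]$ and continuity of $R$ and positive-definiteness), the quadratic part dominates $\frac{\rho}{2}\Vert\eta\Vert_{\L^2}^2$, so $\CC$ is in fact \emph{strictly} convex and coercive: $\CC(u) \geq \frac{\rho}{2}\Vert u - v\Vert_{\L^2}^2 \to +\infty$ as $\Vert u\Vert_{\L^2} \to +\infty$.

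Next I would run the direct method. Let $(u_k)_{k\in\N}$ be a minimizing sequence in $\E$ for $\CC$. Coercivity gives that $(u_k)$ is bounded in $\L^2$, so up to a subsequence $u_k \rightharpoonup u^*$ weakly in $\L^2$ for some $u^* \in \L^2$; since $\E$ is convex and closed for the weak topology, $u^* \in \E$. It remains to check that $\CC$ is sequentially weakly lower semicontinuous. This follows from convexity and continuity (strong lsc): a convex strongly lower semicontinuous functional on a Banach space is weakly sequentially lower semicontinuous. Concretely, one can also argue directly: if $u_k \rightharpoonup u^*$ in $\L^2$ then $q(\cdot,u_k) \to q(\cdot,u^*)$ \emph{strongly} in $\C$ (the linear operator $u \mapsto q_L(\cdot,u)$ is compact from $\L^2$ into $\C$, being a Volterra-type integral operator with continuous kernel, via Arzel\`a--Ascoli), so the terminal and running state-dependent terms pass to the limit, while the control term $\frac12\int_a^b\langle R(\t)(u_k-v),u_k-v\rangle_m\,d\t$ is convex and strongly continuous hence weakly lsc. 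Combining, $\CC(u^*) \leq \liminf_k \CC(u_k) = \inf_{u\in\E}\CC(u)$, so $u^*$ is a minimizer.

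Finally, uniqueness follows from strict convexity: if $u_1^*$ and $u_2^*$ were two distinct minimizers with common minimal value $c$, then by convexity of $\E$ the midpoint $\tfrac12(u_1^*+u_2^*) \in \E$, and strict convexity of $\CC$ gives $\CC(\tfrac12(u_1^*+u_2^*)) < \tfrac12\CC(u_1^*) + \tfrac12\CC(u_2^*) = c$, contradicting minimality. Hence the minimizer is unique, and we denote it $u^*_\E$. The main obstacle — mild here — is the weak lower semicontinuity step: one must be careful that the map $u \mapsto q(\cdot,u)$ does not merely weakly converge but converges strongly in $\C$ along weakly convergent sequences of controls, which is what lets the non-convex-looking cross terms (like $\langle W(q-x),q-x\rangle$ expanded against the limit) behave well; the cleanest route is to invoke convexity plus strong continuity of $\CC$ and the general principle that this implies weak sequential lower semicontinuity, avoiding any term-by-term bookkeeping.
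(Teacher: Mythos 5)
Your proposal is correct and follows essentially the same route as the paper: the direct method with coercivity coming from the uniform positive-definiteness of $R$ on $[a,b]$, weak closedness and convexity of $\E$, weak sequential lower semicontinuity of $\CC$, and strict convexity for uniqueness. The paper's version of your weak-lsc step is its Lemma~\ref{groslem}, which proves concretely (via a Gr\"onwall estimate and an equi-H\"olderian/Arzel\`a--Ascoli argument, i.e., exactly your compactness of the Volterra-type control-to-state map) that $u_k \rightharpoonup u$ implies $q(\cdot,u_k)\to q(\cdot,u)$ in $\C$, then splits the cost term by term; your alternative shortcut via ``convex and strongly continuous implies weakly sequentially lower semicontinuous'' is an equally valid way to conclude.
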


In the permanent control case $\E = \L^2$, we denote the optimal solution by $u^* := u^*_{\L^2}$. The classical PMP (which is concerned with optimal permanent control problems) leads to the following necessary optimality condition.

\begin{proposition}\label{proppmp}
The optimal permanent control $u^*$ satisfies the implicit equality
\begin{equation}\label{eqvaleurdeupermanent}
u^* (t) = v(t)+ R(t)^{-1} B(t)^\top p(t,u^*),
\end{equation}
for almost every $t \in [a,b)$. Note that $u^*$ is (equal almost everywhere to) a continuous function on $[a,b]$. In particular, $u^*$ is bounded.
\end{proposition}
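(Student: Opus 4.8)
The plan is to apply the classical Pontryagin maximum principle to the reduced problem $\min_{u \in \L^2} \CC(u)$ in the permanent case $\E = \L^2$, and then to exploit the quadratic structure to get the explicit formula. First I would write the Hamiltonian $H(q,u,p,t) = \langle p, A(t)q + B(t)u + \omega(t)\rangle_n - \tfrac12 \langle W(t)(q-x(t)),q-x(t)\rangle_n - \tfrac12\langle R(t)(u-v(t)),u-v(t)\rangle_m$ (with the normal multiplier $p^0 = -1$, which is legitimate here because the problem is convex and has no terminal constraint, so no abnormal extremals occur). The adjoint equation $\dot p = -\partial_q H$ together with the transversality condition at $t=b$ is exactly the backward Cauchy problem of Lemma~\ref{propcauchyp}, so the costate associated with $u^*$ is $p(\cdot,u^*)$. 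The maximization condition $u^*(t) \in \operatorname{argmax}_{w \in \R^m} H(q(t,u^*),w,p(t,u^*),t)$ is an unconstrained maximization of a strictly concave quadratic in $w$ (strict concavity because $R(t)$ is positive-definite), so it is equivalent to the vanishing of the gradient, $B(t)^\top p(t,u^*) - R(t)(u^*(t) - v(t)) = 0$, which rearranges to \eqref{eqvaleurdeupermanent} since $R(t)$ is invertible. This holds for almost every $t \in [a,b)$.

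For the regularity claim, I would argue as follows. By Lemma~\ref{propcauchyq} the trajectory $q(\cdot,u^*)$ lies in $\AC$, hence is continuous and bounded on $[a,b]$; feeding this into the backward Cauchy problem of Lemma~\ref{propcauchyp} shows $p(\cdot,u^*) \in \AC$ as well, so in particular $t \mapsto p(t,u^*)$ is continuous on $[a,b]$. Since $t \mapsto v(t)$ and $t \mapsto R(t)^{-1} B(t)^\top$ are continuous on $[a,b]$ (continuity of $R(t)^{-1}$ follows from continuity of $R$ together with invertibility at each $t$, e.g. via Cramer's rule), the right-hand side of \eqref{eqvaleurdeupermanent} defines a continuous function on $[a,b]$ that agrees with $u^*$ almost everywhere. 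Redefining $u^*$ on the null set where equality fails, $u^*$ is continuous on $[a,b]$, and a continuous function on a compact interval is bounded.

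There is no serious obstacle; the only point requiring a little care is the justification that the PMP applies in \emph{normal} form, i.e. that one may take the cost multiplier nonzero. I would handle this either by invoking the standard fact that for problems with free terminal state the PMP is automatically normal (the transversality condition $p(b) = p^0 S(q(b)-q_b)$ would force $p \equiv 0$ and then the adjoint equation would force $p^0 = 0$ too, contradicting $(p^0,p) \neq 0$), or simply by noting that the reduced functional $\CC$ is Fréchet-differentiable on $\L^2$ and strictly convex, so the unconstrained minimizer $u^*$ from Proposition~\ref{thmexistenceunicitepermanent} is characterized by $\CC'(u^*) = 0$; a direct computation of this derivative, using that the Gâteaux derivative of $u \mapsto q(\cdot,u)$ is the solution of the linearized (here, identical) state equation with zero initial data, reproduces \eqref{eqvaleurdeupermanent} after an integration by parts that brings in precisely the costate of Lemma~\ref{propcauchyp}. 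Either route is routine.
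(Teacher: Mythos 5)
Your proposal is correct and follows essentially the same route as the paper, which simply recalls this proposition as a consequence of the classical PMP (citing standard references) without writing out the details: your Hamiltonian, adjoint equation, and transversality condition match Lemma~\ref{propcauchyp} exactly, and the maximization condition yields \eqref{eqvaleurdeupermanent}. Your handling of normality (free terminal state) and of the regularity claim, as well as the alternative convex-analytic route via $\CC'(u^*)=0$, are all sound and consistent with the paper's framework.
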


The proof of our first main result (stated in the next section) is based on the implicit equality~\eqref{eqvaleurdeupermanent}.

\subsection{Convergence result}\label{section23}
In order to define spaces of sampled-data controls, we first introduce the set
$$ \Delta := \bigcup_{N \in \N^*} \left\lbrace h= (h_i)_{i=0,\ldots,N-1} \in (0,+\infty)^N \text{ such that } \sum_{i=0}^{N-1} h_i = b-a  \right\rbrace .$$
For all $h \in \Delta$, we denote by $ \Vert h \Vert_\Delta := \max_{i=0,\ldots,N-1} h_i > 0$, and by $s^h_i := a + \sum_{j=0}^{i-1} h_j$ for all $i=0,\ldots,N$. In particular we have $a = s^h_0 < s^h_1 < \ldots  < s^h_N = b$.

\begin{center}
\begin{tikzpicture}[scale=1]
		\draw (0,0)--(10,0);
		\node at (0,0) {$\vert$}; \node at (2,0) {$\vert$}; \node at (5,0) {$\vert$}; \node at (8,0) {$\vert$}; \node at (10,0) {$\vert$};
		\node at (0,-0.5) {$a = s^h_0$}; \node at (2,-0.5) {$s^h_1$}; \node at (5,-0.5) {$s^h_2$}; \node at (8,-0.5) {$s^h_{N-1}$}; \node at (10,-0.5) {$s^h_{N} = b$};
		\node at (6.5,-0.5) {$\ldots$};
		\node at (1,0.5) {$h_0$}; \node at (3.5,0.5) {$h_1$}; \node at (9,0.5) {$h_{N-1}$};
\end{tikzpicture}
\end{center}

Finally, for all $h \in \Delta$, we introduce the space $\E_h$ of sampled-data controls 
$$ \E_h := \{ u : [a,b) \rightarrow \R^m \; \mid \; u_{| [s^h_i,s^h_{i+1})} \text{ is constant for every $i=0,\ldots,N-1$} \} \subset \L^2. $$
Note that $ u : [a,b) \rightarrow \R^m$ belongs to $\E_h$ if and only if $u = \sum_{i=0}^{N-1} U_{i} \; \mathbf{1}_{[s^h_i,s^h_{i+1})}$, for some $U_i \in \R^m$ for every $i=0,\ldots,N-1$, where $\mathbf{1}$ denotes the indicator function.

\begin{remark}
Considering Problem $(\mathscr{P}_{\E_h})$, note that $(s^h_i)_{i=0,\ldots,N-1}$ play the role of fixed {sampling times} at which (and only at which) the value of the control $u$ can be modified.
\end{remark}

It is clear that $\E_h$ is a non-empty weakly closed convex subset of $\L^2$ for all $h \in \Delta$. From Proposition~\ref{thmexistenceunicitepermanent}, Problem~($\mathscr{P}_{\E_h}$) admits a unique solution denoted by $u^*_h := u^*_{\E_h} \in \E_h$, that is, $u^*_h$ is the optimal sampled-data control associated to $h \in \Delta$.

The PMP recently stated in \cite{bourdin-trelat-pontryagin2} (which can be applied to optimal sampled-data control problems) leads to the following necessary optimality condition.

\begin{proposition}\label{proppmpsample}
Let $h \in \Delta$. The optimal sampled-data control $u^*_h$, written as
\begin{equation}\label{expression_u_hstar}
u^*_h = \sum_{i=0}^{N-1} U^*_{h,i} \; \mathbf{1}_{[s^h_i,s^h_{i+1})},
\end{equation}
with $ U^*_{h,i} \in \R^m$ for every $i=0,\ldots,N-1$,
satisfies the implicit equality
\begin{equation}\label{eqvaleurdeusample}
U^*_{h,i} = \left( \dfrac{1}{h_i} \di \int_{s^h_i}^{s^h_{i+1}} R(s) \; ds \right)^{-1} \left( \dfrac{1}{h_i} \di \int_{s^h_i}^{s^h_{i+1}} R(s) v(s)  + B(s)^\top p(s,u^*_h) \; ds \right),
\end{equation}
for every $i=0,\ldots,N-1$.
\end{proposition}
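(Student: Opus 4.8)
The plan is to apply the PMP of \cite{bourdin-trelat-pontryagin2} to Problem~$(\mathscr{P}_{\E_h})$ and extract from it the stated characterization of the values $U^*_{h,i}$. Since we are in the linear-quadratic setting and $\E_h$ is a finite-dimensional space of piecewise constant controls, the abstract PMP specializes considerably: the adjoint (costate) equation is exactly the backward linear Cauchy problem of Lemma~\ref{propcauchyp} evaluated at $u=u^*_h$, so that $p(\cdot,u^*_h)$ is the costate, and the nontriviality and transversality conditions are automatically encoded there. The only genuinely new feature compared with the permanent case (Proposition~\ref{proppmp}) is the \emph{maximization condition}, which for sampled-data controls does not hold pointwise in $t$ but in an averaged (integrated) form over each sampling interval $[s^h_i,s^h_{i+1})$.

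First I would write down the Hamiltonian of the problem. With the dynamics $\dot q = A q + B u + \omega$ and the running cost density $\ell(t,q,u)=\tfrac12\langle W(t)(q-x(t)),q-x(t)\rangle_n+\tfrac12\langle R(t)(u-v(t)),u-v(t)\rangle_m$, the Hamiltonian (in normal form, the abnormal multiplier being ruled out by the free-endpoint structure and the quadratic cost) is
\[
H(t,q,p,u)=\langle p, A(t)q+B(t)u+\omega(t)\rangle_n-\tfrac12\langle W(t)(q-x(t)),q-x(t)\rangle_n-\tfrac12\langle R(t)(u-v(t)),u-v(t)\rangle_m.
\]
The adjoint equation $\dot p=-\partial_q H$ with $p(b)=-S(q(b,u^*_h)-q_b)$ is precisely the system defining $p(\cdot,u^*_h)$ in Lemma~\ref{propcauchyp}. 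The sampled-data PMP then states that for each $i=0,\dots,N-1$ the constant value $U^*_{h,i}$ maximizes
\[
U\longmapsto \int_{s^h_i}^{s^h_{i+1}} H\big(t,q(t,u^*_h),p(t,u^*_h),U\big)\,dt
\]
over $\R^m$. Only the terms in $H$ that depend on $U$ matter here, namely $\int_{s^h_i}^{s^h_{i+1}}\big(\langle p(t,u^*_h),B(t)U\rangle_n-\tfrac12\langle R(t)(U-v(t)),U-v(t)\rangle_m\big)\,dt$.

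Next I would carry out the interior maximization. This is a strictly concave quadratic function of $U\in\R^m$, because $R(t)$ is positive-definite for every $t$ and $h_i>0$, so $\int_{s^h_i}^{s^h_{i+1}}R(t)\,dt$ is positive-definite and hence invertible. Setting the gradient with respect to $U$ equal to zero gives
\[
\int_{s^h_i}^{s^h_{i+1}} B(t)^\top p(t,u^*_h)\,dt-\int_{s^h_i}^{s^h_{i+1}} R(t)\big(U^*_{h,i}-v(t)\big)\,dt=0,
\]
and solving for $U^*_{h,i}$ yields
\[
U^*_{h,i}=\left(\int_{s^h_i}^{s^h_{i+1}}R(t)\,dt\right)^{-1}\left(\int_{s^h_i}^{s^h_{i+1}}R(t)v(t)+B(t)^\top p(t,u^*_h)\,dt\right),
\]
which is exactly \eqref{eqvaleurdeusample} after dividing numerator and denominator by $h_i$. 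The equality is implicit because $p(\cdot,u^*_h)$ itself depends on $u^*_h$ through $q(\cdot,u^*_h)$ in Lemma~\ref{propcauchyp}.

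The main obstacle, and the step requiring the most care, is the correct invocation of the sampled-data PMP of \cite{bourdin-trelat-pontryagin2}: one must check that the LQOCP $(\mathscr{P}_{\E_h})$ fits the hypotheses of that theorem (fixed initial point, free terminal point, fixed sampling partition $s^h$), identify the resulting nonpermanent Hamiltonian maximization condition as the averaged condition above rather than a pointwise one, and argue that the extremal is normal. Once this is in place, everything else is the routine strictly-convex quadratic optimization just described, and existence and uniqueness of $u^*_h$ (already guaranteed by Proposition~\ref{thmexistenceunicitepermanent}, since $\E_h$ is a non-empty weakly closed convex subset of $\L^2$) ensures that the extremal produced by the PMP is indeed the optimal sampled-data control.
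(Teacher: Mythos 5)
Your proposal is correct and follows exactly the route the paper intends: the paper offers no written proof of Proposition~\ref{proppmpsample} beyond citing the sampled-data PMP of \cite{bourdin-trelat-pontryagin2}, and your argument simply spells out that application (adjoint system as in Lemma~\ref{propcauchyp}, normality from the free endpoint, averaged Hamiltonian stationarity on each $[s^h_i,s^h_{i+1})$, and the strictly concave quadratic maximization in $U$ using the positive-definiteness of $\int_{s^h_i}^{s^h_{i+1}}R$). The computation yielding \eqref{eqvaleurdeusample} is accurate, so no gap to report.
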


The first main result of this paper is the following.

\begin{theorem}\label{thmmainresult}
The sequence $(u^*_h)_{h \in \Delta}$ of optimal sampled-data controls converges pointwise on $[a,b)$ to the optimal permanent control $u^*$ as $\Vert h \Vert_\Delta$ tends to $0$.
\end{theorem}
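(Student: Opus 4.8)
The plan is to exploit the two implicit equalities \eqref{eqvaleurdeupermanent} and \eqref{eqvaleurdeusample} together with a compactness/minimality argument. First I would establish a uniform a priori bound on the family $(u^*_h)_{h \in \Delta}$ in $\L^2$. This follows from the fact that $u^*_h$ is the minimizer of $\CC$ over $\E_h$, hence $\CC(u^*_h) \le \CC(0)$ if $0 \in \E_h$ — more robustly, $\CC(u^*_h) \le \CC(\bar u_h)$ for any convenient comparison control such as the $\E_h$-piecewise-constant interpolation of $v$, whose cost stays bounded uniformly in $h$ by continuity of all the data. Since $R(t)$ is uniformly positive-definite on the compact $[a,b]$, coercivity of the quadratic term $\int_a^b \langle R(u-v),u-v\rangle_m$ yields $\Vert u^*_h \Vert_{\L^2} \le C$ for a constant $C$ independent of $h$. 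Consequently, by Lemma~\ref{propcauchyq} and Gronwall's lemma, $\Vert q(\cdot,u^*_h) \Vert_\infty \le C'$, and then by Lemma~\ref{propcauchyp} and Gronwall again, $\vertiii{p(\cdot,u^*_h)}_\infty \le C''$, with $C',C''$ independent of $h$.

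Next I would argue by contradiction using weak compactness. Suppose the conclusion fails: then there is a sequence $h_k$ with $\Vert h_k \Vert_\Delta \to 0$, a point $t_0 \in [a,b)$, and $\varepsilon>0$ with $\Vert u^*_{h_k}(t_0) - u^*(t_0) \Vert_m \ge \varepsilon$ for all $k$. Up to a subsequence, $u^*_{h_k} \rightharpoonup \tilde u$ weakly in $\L^2$ by the uniform bound. Standard linear-quadratic lower semicontinuity (the cost $\CC$ is convex and strongly continuous, hence weakly l.s.c.) combined with the fact that $\bigcup_h \E_h$ is dense in $\L^2$ and that for any $u\in\L^2$ one can find $u_{h_k}\in\E_{h_k}$ with $u_{h_k}\to u$ strongly (e.g. averaging), shows $\CC(\tilde u) \le \liminf \CC(u^*_{h_k}) \le \lim \CC(u_{h_k}) = \CC(u)$ for every $u$; thus $\tilde u$ minimizes $\CC$ over $\L^2$, and by uniqueness (Proposition~\ref{thmexistenceunicitepermanent} with $\E = \L^2$), $\tilde u = u^*$. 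Then $q(\cdot,u^*_{h_k}) \to q(\cdot,u^*)$ in $\C$ (weak convergence of controls passes through the linear dynamics to strong convergence of states, since $t\mapsto\int_a^t B u^*_{h_k}$ converges uniformly), and likewise $p(\cdot,u^*_{h_k}) \to p(\cdot,u^*)$ in $\C$.

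Finally I would pass to the limit in \eqref{eqvaleurdeusample}. Fix $t_0 \in [a,b)$; for each $k$ let $i_k$ be the index with $t_0 \in [s^{h_k}_{i_k}, s^{h_k}_{i_k+1})$, so that $u^*_{h_k}(t_0) = U^*_{h_k,i_k}$ and the enclosing interval has length $h_{(k)} := s^{h_k}_{i_k+1}-s^{h_k}_{i_k} \le \Vert h_k\Vert_\Delta \to 0$. The averages $\frac{1}{h_{(k)}}\int R(s)\,ds$ and $\frac{1}{h_{(k)}}\int \big(R(s)v(s)+B(s)^\top p(s,u^*_{h_k})\big)\,ds$ converge respectively to $R(t_0)$ and to $R(t_0)v(t_0)+B(t_0)^\top p(t_0,u^*)$: for the first, by continuity of $R$; for the second, by uniform continuity of $R$ and $v$, uniform continuity of $B(\cdot)^\top p(\cdot,u^*)$, and the uniform bound $\vertiii{p(\cdot,u^*_{h_k})}_\infty \le C''$ together with $p(\cdot,u^*_{h_k})\to p(\cdot,u^*)$ in $\C$. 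Since matrix inversion is continuous on the set of positive-definite matrices (and $R(t_0)$ is invertible), \eqref{eqvaleurdeusample} yields $u^*_{h_k}(t_0) \to R(t_0)^{-1}\big(R(t_0)v(t_0)+B(t_0)^\top p(t_0,u^*)\big) = v(t_0)+R(t_0)^{-1}B(t_0)^\top p(t_0,u^*) = u^*(t_0)$, the last equality by \eqref{eqvaleurdeupermanent} and the continuity of $u^*$ asserted in Proposition~\ref{proppmp}. This contradicts $\Vert u^*_{h_k}(t_0)-u^*(t_0)\Vert_m \ge \varepsilon$, proving the theorem.

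The main obstacle I anticipate is the identification step $\tilde u = u^*$: it requires care to produce, for an arbitrary target $u \in \L^2$, a recovery sequence $u_{h_k} \in \E_{h_k}$ with $u_{h_k} \to u$ strongly in $\L^2$ (local averaging of $u$ over the subdivision cells works, invoking $\L^2$-continuity of translations / a density argument), and to check the passage of weak convergence through the state and costate equations; everything else is a routine application of Gronwall's lemma, uniform continuity on compact intervals, and continuity of matrix inversion.
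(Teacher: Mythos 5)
Your proposal is correct and follows essentially the same route as the paper's proof: a piecewise-constant averaging (recovery) sequence is used to show that $u^*_h \rightharpoonup u^*$ in $\L^2$, hence $p(\cdot,u^*_h)\to p(\cdot,u^*)$ uniformly, and one then passes to the limit in the sampled implicit equality \eqref{eqvaleurdeusample} to recover \eqref{eqvaleurdeupermanent}. The only cosmetic difference is that the paper identifies the weak limit by squeezing $\CC(u^*)\le\CC(u^*_h)\le\CC(u_h)$ with $u_h$ the average of the continuous control $u^*$ over each cell, whereas you invoke density of $\bigcup_h\E_h$ and weak lower semicontinuity against arbitrary targets.
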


\begin{proof}
Theorem~\ref{thmmainresult} follows from the apparent relationship between the implicit equalities~\eqref{eqvaleurdeupermanent} and \eqref{eqvaleurdeusample}, and from the continuity of $R$, $B$, $v$ and $p$. Actually we only need to prove that $p(\cdot,u^*_h) \to p(\cdot,u^*)$ in $\C$. To this end, we introduce for all $h \in \Delta$ the sampled-data control $u_h \in \E_h$ defined by $ u_h := \sum_{i=0}^{N-1} U_{h,i} \; \mathbf{1}_{[s^h_i,s^h_{i+1})}$, where
$$ U_{h,i} := \di \frac{1}{h_i} \int_{s^h_i}^{s^h_{i+1}} u^* (s) \; ds ,$$
for every $i=0,\ldots,N-1$. Since $u^*$ is continuous on $[a,b]$, it is clear that $u_h (t)$ converges to $u^*(t)$ for every $t \in [a,b]$, and from the classical Lebesgue dominated convergence theorem that $u_h \to u^*$ in $\L^2$, when $\Vert h \Vert_\Delta$ tends to $0$. From Lemma~\ref{groslem} in Section~\ref{annexeproof}, we conclude that $\CC (u_h) $ tends to $\CC(u^*)$. By optimality of $u^*$ and $u^*_h$, we have $ \CC (u^*) \leq \CC (u^*_h) \leq \CC (u_h) $ for all $h \in \Delta$, and we get that $\CC (u^*_h) $ tends to $\CC(u^*)$ when $\Vert h \Vert_\Delta$ tends to $0$. Since $\CC (u^*_h) $ tends to $\CC(u^*)$, we conclude that $(u^*_h)_{h \in \Delta}$ is a minimizing sequence of $\CC$ on $\L^2$ and, using the same arguments as in the proof of Proposition~\ref{thmexistenceunicitepermanent} (see Section~\ref{annexeproof}), we deduce that (up to a subsequence that we do not relabel) $u^*_h \rightharpoonup u^*$ in $\L^2$. Actually one can easily prove by contradiction that the whole sequence $(u^*_h)_{h \in \Delta}$ weakly converges to $u^*$ in $\L^2$. Finally Lemma~\ref{groslem} in Section~\ref{annexeproof} concludes the proof.
\end{proof}

\begin{remark}\label{rem}
It follows from the above proof and from Lemma~\ref{groslem} in Section~\ref{annexeproof} that 
$$
\CC(u^*_h)\rightarrow \CC(u^*),\quad u^*_h \rightharpoonup u^*\ \textrm{in}\ L^2,\quad
q(\cdot,u^*_h) \rightarrow q(\cdot,u^*)\ \textrm{in}\ \C,
$$
as $\Vert h \Vert_\Delta\rightarrow 0$.
\end{remark}

\begin{remark}\label{assumptionsweakened}
The results of Theorem \ref{thmmainresult} and Remark~\ref{rem} remain valid under the following weakened regularity assumptions on the data of Problem~\eqref{theproblem}:
\begin{itemize}
\item[-] $A$ and $W$ are integrable matrices, $B$ is a $\ell$-integrable matrix with $\ell > 2$, and $R$ is an essentially bounded matrix;
\item[-] $x$ and $\omega$ are integrable functions, and $v$ is a square-integrable function;
\item[-] The products $Wx$ and $W^\top x$ and the scalar product $\langle Wx,x \rangle$ are integrable functions;
\item[-] $W(t)$ is positive-semidefinite for almost every $t \in [a,b]$;
\item[-] There exists a constant $c_R > 0$ such that $\langle R(t) z,z \rangle_m \geq  c_R \Vert z \Vert^2_m$ for almost every $t \in [a,b]$ and every $z \in \R^m$.
\end{itemize}
One can easily adapt the proof of Theorem~\ref{thmmainresult} (by considering Lebesgue points) and prove that the  convergence of the sequence $(u^*_h)_{h \in \Delta}$ to $u^*$ is still valid, but only almost everywhere on $[a,b)$. 
\end{remark}

\begin{remark}\label{controlaveraged}
In the above proof, $u_h$ is introduced as the sampled-data control whose values correspond to the averages of the optimal permanent control $u^*$ on each sampling interval $[s_i^h,s_{i+1}^h)$. Let us mention that $u_h \neq u^*_h$ in general (see Figure~\ref{figure2} in Section~\ref{section4} for an example). Choosing $\varphi_h = u_h$ as a sampled-data control does not lead in general to an optimal rate of convergence of $\CC (\varphi_h) $ to $\CC(u^*)$ when $\Vert h \Vert_\Delta$ tends to $0$. On the other hand, from the optimality of $u^*_h$, it is clear that choosing $\varphi_h = u^*_h$ does. Moreover, choosing $\varphi_h = u_h$ requires the knowledge of $u^*$, while choosing $\varphi_h = u^*_h$ does not. Indeed, we provide in the next section a recursive way allowing to compute explicitly the optimal coefficients $U^*_{h,i}$ for every $i=0,\ldots,N-1$ (see Corollary~\ref{corollary} in Section~\ref{secmainresult2}).
\end{remark}

\section{Riccati theory for optimal sampled-data controls}\label{section3}
In this section, we fix $h \in \Delta$ and our objective is to provide an expression for the optimal sampled-data control $u^*_h$ as a closed-loop control (see Theorem~\ref{thmmainresult2} in Section~\ref{secmainresult2}). This corresponds to an extension of the classical Riccati theory to the sampled-data control case. Moreover, we will show that our extension of the Riccati theory allows to compute explicitly (and in a recursive way) the optimal coefficients $U^*_{h,i}$ for every $i=0,\ldots,N-1$ (see Corollary~\ref{corollary} in Section~\ref{secmainresult2}).

To be in accordance with the classical literature on Riccati theory, and for the sake of completeness, we will provide two different proofs of Theorem~\ref{thmmainresult2}. The first proof (see Section~\ref{secfirstproof}) is based on Proposition~\ref{proppmpsample}, \textit{i.e.}, on the PMP recently stated in \cite{bourdin-trelat-pontryagin2}. The second proof (see Section~\ref{secsecondproof}) is based on the dynamic programming principle, and extends a strategy used in \cite{bini}.

For the ease of notations, since $h \in \Delta$ is fixed throughout Section~\ref{section3}, we set $s_i := s^h_i$ for all $i=0,\ldots,N$.

\subsection{Some notations}\label{secmatrice}
For every $s \in [a,b]$, we denote by $Z(\cdot,s) : [a,b] \rightarrow \R^{n,n}$ the unique solution of the backward/forward linear Cauchy problem given by
\begin{equation*}
\left\{\begin{split}
& \dot Z(t) = A(t) Z(t), \quad \text{for every $t \in [a,b]$,}  \\
& Z(s)= \mathrm{Id}_n ,
\end{split}\right.
\end{equation*}
and $Z(\cdot,\cdot)$ is the so-called \textit{state-transition matrix} associated to $A$.

Let us introduce the following terms that will play an important role in the sequel:
$$ \ZB_i := \di \int_{s_i}^{s_{i+1}} Z(s_{i+1}, s ) B(s ) \; ds  \in \R^{n,m} , $$
$$ \ZO_i := \left\lbrace \begin{array}{lcl}
\di \int_{s_i}^{s_{i+1}} Z(s_{i+1},s ) \omega(s ) \; ds  \in \R^{n}, & \text{if} & i = 1, \ldots ,N-2, \\[15pt]
\di \int_{s_i}^{s_{i+1}} Z(s_{i+1},s ) \omega(s ) \; ds  - q_b \in \R^{n}, & \text{if} & i = N-1,
\end{array}
 \right. $$
$$ \ZWZ_i := \di \int_{s_i}^{s_{i+1}} Z(\t,s_i)^\top W(\t) Z(\t,s_i) \; d\t \in \R^{n,n}, $$
$$ \ZBWZ_i := \di \int_{s_i}^{s_{i+1}} \left( \int_{s_i}^{\t} Z(\t,s)B(s) \; ds \right)^\top {W}(\t) Z(\t,s_i) \; d\t  \in \R^{m,n}, $$
$$ \ZBWZB_i := \di \int_{s_i}^{s_{i+1}} \left( \int_{s_i}^{\t} Z(\t,s)B(s) \; ds \right)^\top W(\t)  \left( \int_{s_i}^{\t} Z(\t,s)B(s) \; ds \right) \; d\t  \in \R^{m,m} , $$
$$ \ZBWZOX_i := \di \int_{s_i}^{s_{i+1}} \left( \int_{s_i}^{\t} Z(\t,s)B(s) \; ds \right)^\top {W}(\t) \left( \int_{s_i}^{\t} Z(\t,s) \omega (s) \; ds - x(\t) \right) \; d\t  \in \R^{m}, $$
$$ \ZWZOX_i := \di \int_{s_i}^{s_{i+1}} Z(\t,s_i)^\top {W}(\t) \left( \int_{s_i}^{\t} Z(\t,s) \omega (s) \; ds - x(\t) \right) \; d\t \in \R^{n},  $$
$$ \WZOX^2_i :=  \di \int_{s_i}^{s_{i+1}} \left\langle W(\t) \left( \int_{s_i}^{\t} Z(\t,s) \omega (s) \; ds -x(\t)  \right) , \left( \int_{s_i}^{\t} Z(\t,s) \omega (s) \; ds -x(\t)  \right) \right\rangle \; d\t \in \R,$$
$$ \RR_i := \di \int_{s_i}^{s_{i+1}} R(s) \; ds   \in \R^{m,m},$$
$$ \RV_i := \di \int_{s_i}^{s_{i+1}} R(s) v(s) \; ds   \in \R^{m},  $$
$$ \RV^2_i := \di \int_{s_i}^{s_{i+1}} \big\langle  R(\t) v(\t) , v(\t) \big\rangle_m \; d\t   \in \R,  $$
for every $i=0,\ldots,N-1$.

Finally we introduce $(\KK_i)_{i=0,\ldots,N}$, $(\JJ_i)_{i=0,\ldots,N}$ and $(\YY_i)_{i=0,\ldots,N}$ the following backward recursive sequences:
\begin{equation*}
\left\{\begin{split}
& \KK_{N}=S , \\
& \KK_i = \QQ_i - \PP_i^\top \TT_i^{-1} \PP_i \in \R^{n,n}, \quad \text{for every $i=N-1,\ldots,0$,}
\end{split}\right.
\end{equation*}
and
\begin{equation*}
\left\{\begin{split}
& \JJ_{N}=0_{\R^n} , \\
& \JJ_i = \GG_i - \PP_i^\top \TT_i^{-1} \HH_i \in \R^n, \quad \text{for every $i=N-1,\ldots,0$,}
\end{split}\right.
\end{equation*}
and
\begin{equation*}
\left\{\begin{split}
& \YY_{N}=0 , \\
& \YY_i = \FF_i - \langle \TT_i^{-1} \HH_i, \HH_i \rangle_m \in \R, \quad \text{for every $i=N-1,\ldots,0$,}
\end{split}\right.
\end{equation*}
where $\FF_i$, $\GG_i$, $\HH_i$, $\PP_i$, $\QQ_i$ and $\TT_i$ are defined (explicitly and dependently on $\KK_{i+1}$, $\JJ_{i+1}$ and $\YY_{i+1}$) as follows:
$$ \FF_i :=  \langle \KK_{i+1} \ZO_i ,  \ZO_i \rangle_n + \WZOX_i^2 + \RV_i^2 + 2 \langle \JJ_{i+1} , \ZO_i \rangle_n + \YY_{i+1}  \in \R, $$
$$ \GG_i := Z(s_{i+1},s_i)^\top {\KK}_{i+1} \ZO_i + \ZWZOX_i + Z(s_{i+1},s_i)^\top \JJ_{i+1} \in \R^{n},  $$
$$ \HH_i := \ZB^\top_i {\KK}_{i+1} \ZO_i + \ZBWZOX_i -  \RV_i + \ZB_i^\top \JJ_{i+1}  \in \R^{m}, $$
$$ \PP_i := \ZB_i^\top {\KK}_{i+1} Z(s_{i+1},s_i) + \ZBWZ_i \in \R^{m,n}, $$
$$ \QQ_i := Z(s_{i+1},s_i)^\top \KK_{i+1} Z(s_{i+1},s_i) + \ZWZ_i \in \R^{n,n}, $$
$$ \TT_i := \ZB_i^\top \KK_{i+1} \ZB_i + \ZBWZB_i + \RR_i  \in \R^{m,m}, $$
for every $i=N-1,\ldots,0$.

\begin{remark}\label{remTiinv}
A necessary condition for the backward sequences $(\KK_i)_{i=0,\ldots,N}$, $(\JJ_i)_{i=0,\ldots,N}$ and $(\YY_i)_{i=0,\ldots,N}$ to be well defined, is the invertibility of ${\TT}_i$ for every $i=N-1,\ldots,0$. This necessary condition will be established in Section~\ref{secfirstproof} (see also Section~\ref{secsecondproof}). More precisely, we will prove in a backward recursive way that $\KK_{i+1}$ is positive-semidefinite for every $i=N-1,\ldots,0$. As a consequence, $\TT_i$ is equal to a sum of two positive-semidefinite matrices $\ZB_i^\top \KK_{i+1} \ZB_i$ and $\ZBWZB_i$ (see Remark~\ref{rempositivematrix2}) and of a positive-definite matrix $\RR_i$. We deduce that $\TT_i$ is positive-definite and hence it is invertible for every $i=N-1,\ldots,0$.
\end{remark}

\begin{remark}\label{termdependant}
All terms introduced in this section depend only on the data of Problem~$(\mathscr{P}_{\E_h})$, \textit{i.e.}, on $A$, $B$, $S$, $W$, $R$, $\omega$, $q_b$, $x$, $v$ and $h$. It is worth to note that they do not depend on the initial condition~$q_a$. As a consequence, all these terms, that are defined in a backward recursive way, remain unchanged if the initial condition in Problem~$(\mathscr{P}_{\E_h})$ is modified, and they remain unchanged as well if the initial time $a$ is replaced by $s_j$ for some $j=0,\ldots, N-1$ and $h$ is replaced by $(h_i)_{i=j,\ldots,N-1}$.
\end{remark}

\begin{remark}\label{cashomogene}
In the homogeneous case, we have $\ZO_i = \ZWZOX_i = \GG_i = \JJ_i = 0_{\R^n}$, $\ZBWZOX_i = \RV_i = \HH_i = 0_{\R^m}$, $\WZOX^2_i = \RV^2_i = \FF_i =\YY_i = 0$ for every $i=0,\ldots,N$, and then many simplifications occur in the formulas.
\end{remark}

\begin{remark}\label{Kindependant}
The sequences $(\KK_i)_{i=0,\ldots,N}$, $(\PP_i)_{i=0,\ldots,N}$, $(\QQ_i)_{i=0,\ldots,N}$ and $(\TT_i)_{i=0,\ldots,N}$ do not depend on the nonhomogeneous data $q_b$, $\omega$, $x$ and $v$. As a consequence, they remain unchanged regardless of whether we consider the homogeneous or the nonhomogeneous Problem~$(\mathscr{P}_{\E_h})$.
\end{remark}

\subsection{Closed-loop optimal control}\label{secmainresult2}
The second main result of this paper is the following.

\begin{theorem}\label{thmmainresult2}
The optimal sampled-data control $u^*_h$, defined by \eqref{expression_u_hstar}, is given in a closed-loop form as
$$ U^*_{h,i} = - {\TT}_i^{-1} \big( \PP_i q (s_i , u^*_h) + \HH_i \big),$$
for every $i=0,\ldots,N-1$. Moreover, the corresponding optimal cost is
$$ \CC(u^*_h) = \dfrac{1}{2} \langle \KK_0 q_a , q_a \rangle_n + \langle \JJ_0, q_a \rangle_n +\dfrac{1}{2} \YY_0 .$$
\end{theorem}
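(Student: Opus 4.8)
The plan is to prove Theorem~\ref{thmmainresult2} by the first method, i.e.\ starting from the open-loop expression~\eqref{eqvaleurdeusample} furnished by Proposition~\ref{proppmpsample} and converting it into a closed-loop form by introducing, on each sampling interval, the analogue of the Riccati substitution $p(t,u^*_h) = -\KK(t) q(t,u^*_h) + \text{(affine term)}$. Concretely, I would first use the state-transition matrix $Z(\cdot,\cdot)$ to write, for each $i=0,\ldots,N-1$ and $t\in[s_i,s_{i+1}]$, the explicit variation-of-constants formulas for $q(\cdot,u^*_h)$ and $p(\cdot,u^*_h)$ in terms of $q(s_i,u^*_h)$, the constant $U^*_{h,i}$, and the data $\omega$, $x$, $v$; this is exactly where the many matrices $\ZB_i$, $\ZO_i$, $\ZWZ_i$, $\ZBWZ_i$, etc.\ come from. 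In particular $q(s_{i+1},u^*_h) = Z(s_{i+1},s_i) q(s_i,u^*_h) + \ZB_i U^*_{h,i} + \ZO_i$ (with the $q_b$ correction absorbed into $\ZO_{N-1}$), and $p(s_i,u^*_h)$ admits a similar expression in terms of $p(s_{i+1},u^*_h)$, $q(s_i,u^*_h)$ and $U^*_{h,i}$.

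Next I would set up the induction. The induction hypothesis at index $i+1$ is that $p(s_{i+1},u^*_h) = -\KK_{i+1} q(s_{i+1},u^*_h) - \JJ_{i+1}$, together with the claim that $\KK_{i+1}$ is positive-semidefinite; the base case $i=N-1$ is immediate from the transversality condition $p(b,u^*_h) = -S(q(b,u^*_h)-q_b)$ and the definitions $\KK_N=S$, $\JJ_N=0$ (recall $q_b$ is hidden inside $\ZO_{N-1}$). Assuming it at $i+1$, I would substitute the open-loop formula~\eqref{eqvaleurdeusample} for $U^*_{h,i}$ — rewritten as $\RR_i U^*_{h,i} = \RV_i + \int_{s_i}^{s_{i+1}} B(s)^\top p(s,u^*_h)\,ds$ — then express everything on $[s_i,s_{i+1}]$ in terms of $q(s_i,u^*_h)$ and $U^*_{h,i}$ using the variation-of-constants formulas, and collect terms. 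This yields a linear system of the shape $\TT_i U^*_{h,i} = -\PP_i q(s_i,u^*_h) - \HH_i$, with $\TT_i$, $\PP_i$, $\HH_i$ exactly the matrices defined in Section~\ref{secmatrice}; the positive-semidefiniteness of $\KK_{i+1}$ guarantees, via Remarks~\ref{rempositivematrix2} and~\ref{remTiinv}, that $\TT_i$ is positive-definite, hence invertible, which both legitimizes the formula and advances the "$\KK$ is PSD" part of the hypothesis once one checks $\KK_i = \QQ_i - \PP_i^\top\TT_i^{-1}\PP_i$ is PSD (a Schur-complement argument on the Hessian of the cost restricted to the interval). Plugging this $U^*_{h,i}$ back into the propagated expression for $p(s_i,u^*_h)$ and again regrouping gives $p(s_i,u^*_h) = -\KK_i q(s_i,u^*_h) - \JJ_i$ with the stated recursions for $\KK_i$ and $\JJ_i$, closing the induction and proving the closed-loop formula for every $i$.

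For the optimal cost, I would run a parallel backward induction on the "cost-to-go" from $s_i$: using Remark~\ref{termdependant}, the tail problem starting at $s_i$ has the same structure, so one shows $\CC(u^*_h)$ decomposes as a sum over intervals and that the cost-to-go from $s_i$ equals $\frac12\langle\KK_i q(s_i,u^*_h),q(s_i,u^*_h)\rangle_n + \langle\JJ_i,q(s_i,u^*_h)\rangle_n + \frac12\YY_i$; substituting the optimal $U^*_{h,i}$ into the per-interval cost and using the recursions for $\KK_i,\JJ_i,\YY_i$ makes the induction step go through, and at $i=0$ one has $q(s_0,u^*_h)=q_a$, giving the claimed value. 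The main obstacle I anticipate is purely computational: carefully carrying out the variation-of-constants bookkeeping on a single interval so that, after substituting~\eqref{eqvaleurdeusample}, the coefficients of $q(s_i,u^*_h)$, of $U^*_{h,i}$, and the constant term line up precisely with the definitions of $\TT_i,\PP_i,\HH_i$ (and then $\QQ_i,\GG_i,\FF_i$) — there is no conceptual difficulty beyond the Schur-complement/PSD verification, but the algebra is long and the sign conventions (the minus sign in the Riccati substitution, the backward costate equation, the $q_b$ shift in $\ZO_{N-1}$) must be tracked with care.
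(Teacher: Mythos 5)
Your proposal follows essentially the same route as the paper's first proof of Theorem~\ref{thmmainresult2}: the open-loop identity~\eqref{eqvaleurdeusample} is combined with the Duhamel formulas for $q(\cdot,u^*_h)$ and $p(\cdot,u^*_h)$ on each sampling interval, and a backward induction simultaneously establishes the invertibility of $\TT_i$, the closed-loop formula, the affine costate relation $p(s_i,u^*_h)=-(\KK_i q(s_i,u^*_h)+\JJ_i)$, and the quadratic expression of the cost-to-go, which at $i=0$ gives the optimal cost. The only real variation is your Schur-complement argument for the positive-semidefiniteness of $\KK_i=\QQ_i-\PP_i^\top\TT_i^{-1}\PP_i$: the paper instead notes that in the homogeneous case $\mathcal{V}_i(y)=\frac{1}{2}\langle \KK_i y,y\rangle_n\geq 0$ by nonnegativity of the value function (using Remark~\ref{Kindependant}), which is exactly your Schur-complement statement in disguise --- just make sure the quadratic form whose Schur complement you take includes the terminal contribution $\frac{1}{2}\langle \KK_{i+1}\Phi_{i,y}(U_i),\Phi_{i,y}(U_i)\rangle_n$ and not only the running cost on $[s_i,s_{i+1}]$.
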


Two different proofs of Theorem~\ref{thmmainresult2} are given in Sections~\ref{secfirstproof} and \ref{secsecondproof}.

\begin{corollary}\label{corollary}
The optimal values $(U^*_{h,i})_{i=0,\ldots,N-1}$ can be explicitly computed by induction,
$$ \left\lbrace \begin{array}{l}
U^*_{h,i} = - {\TT}_i^{-1} \big( \PP_i q_i + \HH_i \big), \\[5pt]
q_{i+1} = Z(s_{i+1},s_{i}) q_i + \ZB_{i} U^*_{h,i} + \ZO_i,
\end{array} \right. $$
for every $i=0,\ldots,N-1$, with the initial condition $q_0 = q_a$.
\end{corollary}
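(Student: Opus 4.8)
The plan is to derive Corollary~\ref{corollary} directly from Theorem~\ref{thmmainresult2} together with the dynamics of Problem~$(\mathscr{P}_{\E_h})$. The key observation is that the closed-loop formula in Theorem~\ref{thmmainresult2} expresses $U^*_{h,i}$ in terms of the single quantity $q(s_i,u^*_h)$, the value of the optimal state trajectory at the sampling time $s_i$; so if one knows $q(s_i,u^*_h)$, one immediately obtains $U^*_{h,i}$, and then one only needs a rule to propagate the state from $s_i$ to $s_{i+1}$.

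First I would set $q_i := q(s_i,u^*_h)$ for every $i=0,\ldots,N$ and note that $q_0 = q(a,u^*_h) = q_a$ by the initial condition in \eqref{theproblem}. This gives the base case of the induction. Next, I would establish the propagation rule: on the interval $[s_i,s_{i+1})$ the optimal control $u^*_h$ is constant equal to $U^*_{h,i}$, so by Lemma~\ref{propcauchyq} and the variation-of-constants (Duhamel) formula applied with the state-transition matrix $Z(\cdot,\cdot)$ associated to $A$, one has
$$ q(s_{i+1},u^*_h) = Z(s_{i+1},s_i)\, q(s_i,u^*_h) + \int_{s_i}^{s_{i+1}} Z(s_{i+1},s) B(s)\, ds \; U^*_{h,i} + \int_{s_i}^{s_{i+1}} Z(s_{i+1},s) \omega(s)\, ds. $$
Recognizing the first integral as $\ZB_i$ (by the definition in Section~\ref{secmatrice}) and the last integral as $\ZO_i + q_b \cdot \mathbf{1}_{\{i=N-1\}}$; but since the recursion is stated for $i=0,\ldots,N-1$ and one never actually needs $q_N$ explicitly in the formula for $U^*_{h,i}$, the natural formulation is to keep the clean identity $q_{i+1} = Z(s_{i+1},s_i) q_i + \ZB_i U^*_{h,i} + \int_{s_i}^{s_{i+1}} Z(s_{i+1},s)\omega(s)\,ds$, which for $i \le N-2$ is exactly $q_{i+1} = Z(s_{i+1},s_i) q_i + \ZB_i U^*_{h,i} + \ZO_i$ as claimed. (I would include a brief remark clarifying this $i=N-1$ bookkeeping, or simply restrict the recursive state update to $i=0,\ldots,N-2$ as is only what is used; this is the one spot requiring a small amount of care.)

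With these two ingredients the induction is immediate: assuming $q_i = q(s_i,u^*_h)$ is known, Theorem~\ref{thmmainresult2} gives $U^*_{h,i} = -\TT_i^{-1}(\PP_i q_i + \HH_i)$, and then the Duhamel identity gives $q_{i+1} = Z(s_{i+1},s_i) q_i + \ZB_i U^*_{h,i} + \ZO_i$, closing the loop. The only genuine subtlety — and the main obstacle, though it is a mild one — is the $i=N-1$ endpoint of $\ZO_i$ (which carries the extra $-q_b$ term): one must make sure this does not corrupt the recursion, which it does not, because the formula for $U^*_{h,i}$ is well defined using only $q_0,\ldots,q_{N-1}$ and the $-q_b$ shift is precisely the bookkeeping device built into the definitions of $\ZO_{N-1}$, $\FF_{N-1}$, $\HH_{N-1}$, etc. Everything else is a direct transcription of Theorem~\ref{thmmainresult2} and Lemma~\ref{propcauchyq}.
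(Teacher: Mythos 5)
Your proposal is correct and takes essentially the same route as the paper's own (two-line) proof: the Duhamel formula yields the state recursion $q_{i+1} = Z(s_{i+1},s_i)q_i + \ZB_i U^*_{h,i} + \ZO_i$, and Theorem~\ref{thmmainresult2} supplies the closed-loop formula at each step. Your side remark about the $-q_b$ shift built into $\ZO_{N-1}$ is a valid point the paper glosses over, and you correctly observe it is harmless since $q_N$ is never used in the recursion.
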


\begin{proof}
The Duhamel formula gives $ q (s_{i+1} , u^*_h) = Z(s_{i+1},s_{i}) q (s_i , u^*_h) + \ZB_{i} U^*_{h,i} + \ZO_i$, for every $i=0,\ldots,N-1$. Corollary~\ref{corollary} follows using Theorem~\ref{thmmainresult2}.
\end{proof}

As a conclusion, in order to compute explicitly the optimal coefficients $(U^*_{h,i})_{i=0,\ldots,N-1}$, one has beforehand to compute all terms introduced in Section~\ref{secmatrice} (they depend only on the data of Problem~($\mathscr{P}_{\E_h}$), see Remark~\ref{termdependant}). Secondly, one has to compute the induction provided in Corollary~\ref{corollary}.

\subsection{Some numerical simulations for a simple example}\label{section4}

In this section we focus on the unidimensional LQOCP given by
\begin{equation}\label{theproblemsection4}\tag{$\QQQ$}
\begin{array}{ll}
\text{minimize} & \di \int_0^1  q(\t)^2 + \frac{1}{2} u(\t)^2  \; d\t, \\
& \\
\text{subject to} & q \in \AC, \quad u \in \E \subset \L^2, \\[5pt]
& \dot q(t) = \frac{1}{2} q(t) + u(t), \quad \text{a.e. $t \in [0,1]$,}  \\[3pt]
& q(0)=1.
\end{array}
\end{equation}
It is clear that the data of Problem~\eqref{theproblemsection4} satisfy all assumptions of Section~\ref{secprelim}. This very simple problem has been considered in \cite{dontchev,hager,hager2}, where the authors were interested in convergence issues for specific discretizations, showing that the simplest direct method diverges when considering an explicit second-order Runge-Kutta discretization. 
This is why this apparently inoffensive example is interesting and this is why we consider it here as well.

In the permanent control case $\E = \L^2$, the unique optimal permanent control $u^*$ is  given by
$$ \forall t \in [0,1], \quad u^* (t) = \dfrac{2(e^{3t}-e^3)}{e^{3t/2} (2+e^3)}. $$
In this section, we are interested in the unique solution $u^*_h$ of Problem~$(\mathscr{Q}_{\E_h})$ for different values of $h \in \Delta$. More precisely, we take $h = \frac{1}{N} (1,\ldots,1) \in (0,+\infty)^N$ with different values of $N \in \N^*$. Computing the induction provided in Corollary~\ref{corollary}, we obtain the numerical results depicted in Figure~\ref{figure1}. When $\Vert h \Vert_\Delta$ tends to $0$, we observe as expected (see Theorem~\ref{thmmainresult}) the pointwise convergence of $u^*_h$ to $u^*$.
\begin{figure}[h]
\centering
\includegraphics[scale=0.35]{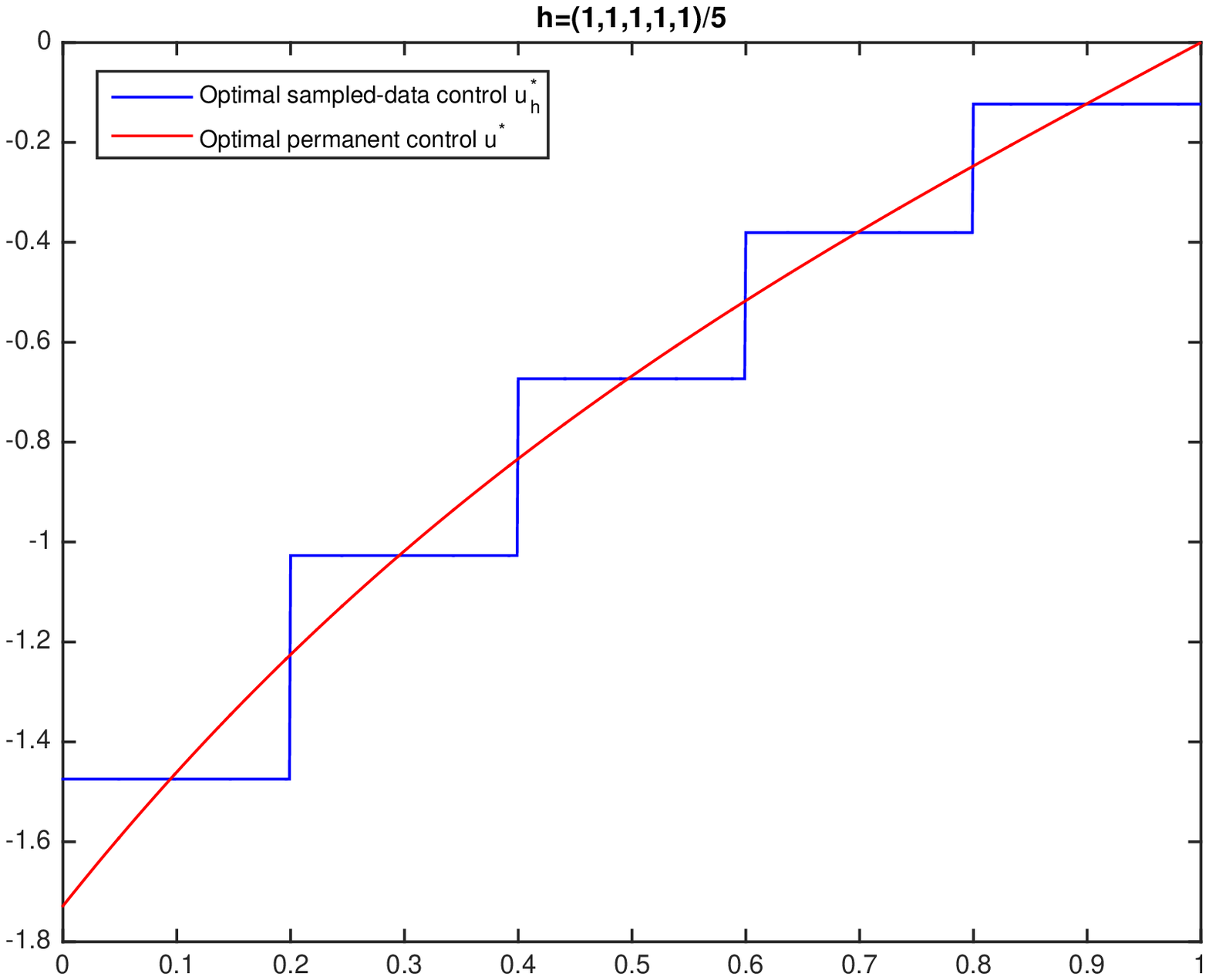}  \includegraphics[scale=0.35]{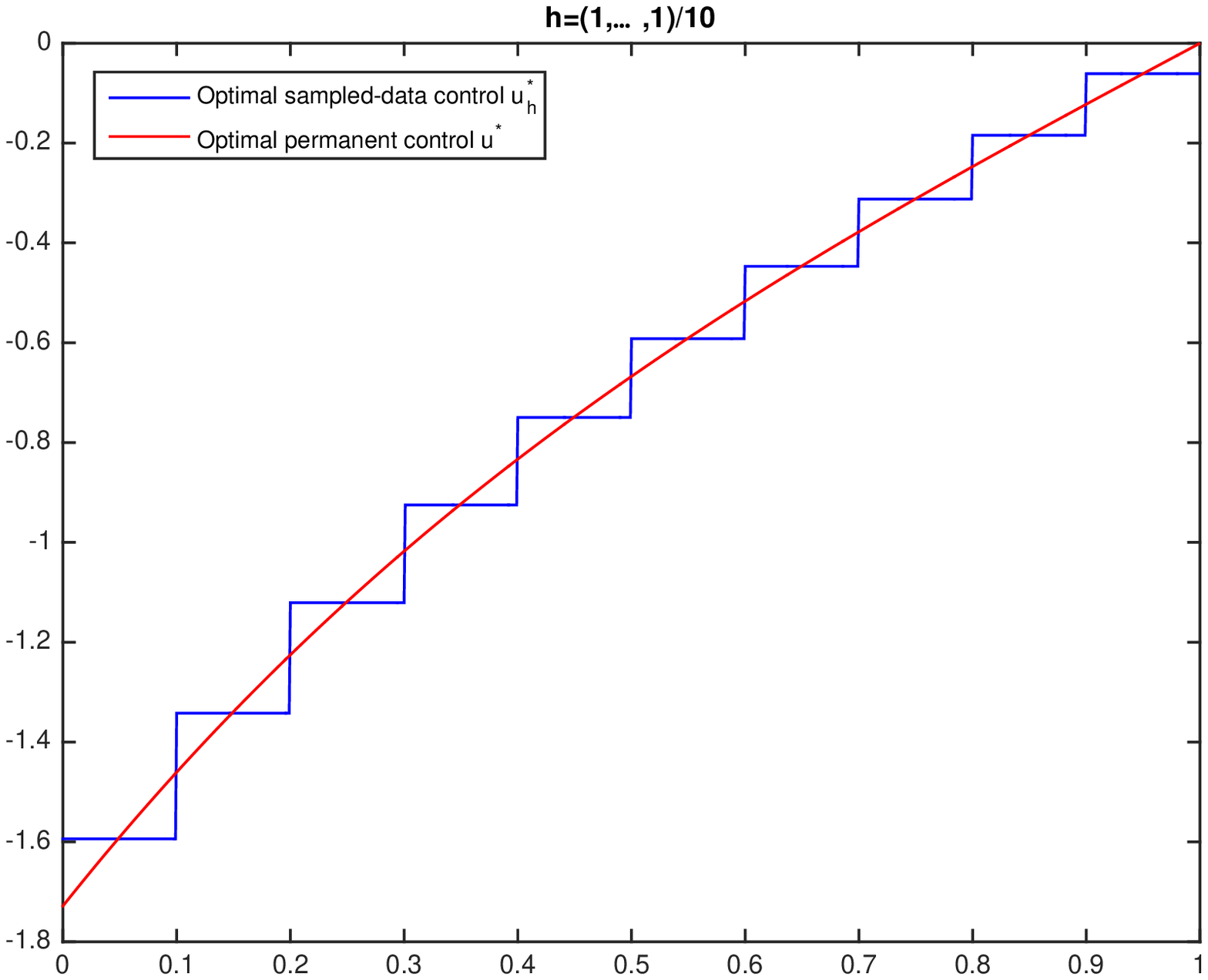} \\
\includegraphics[scale=0.35]{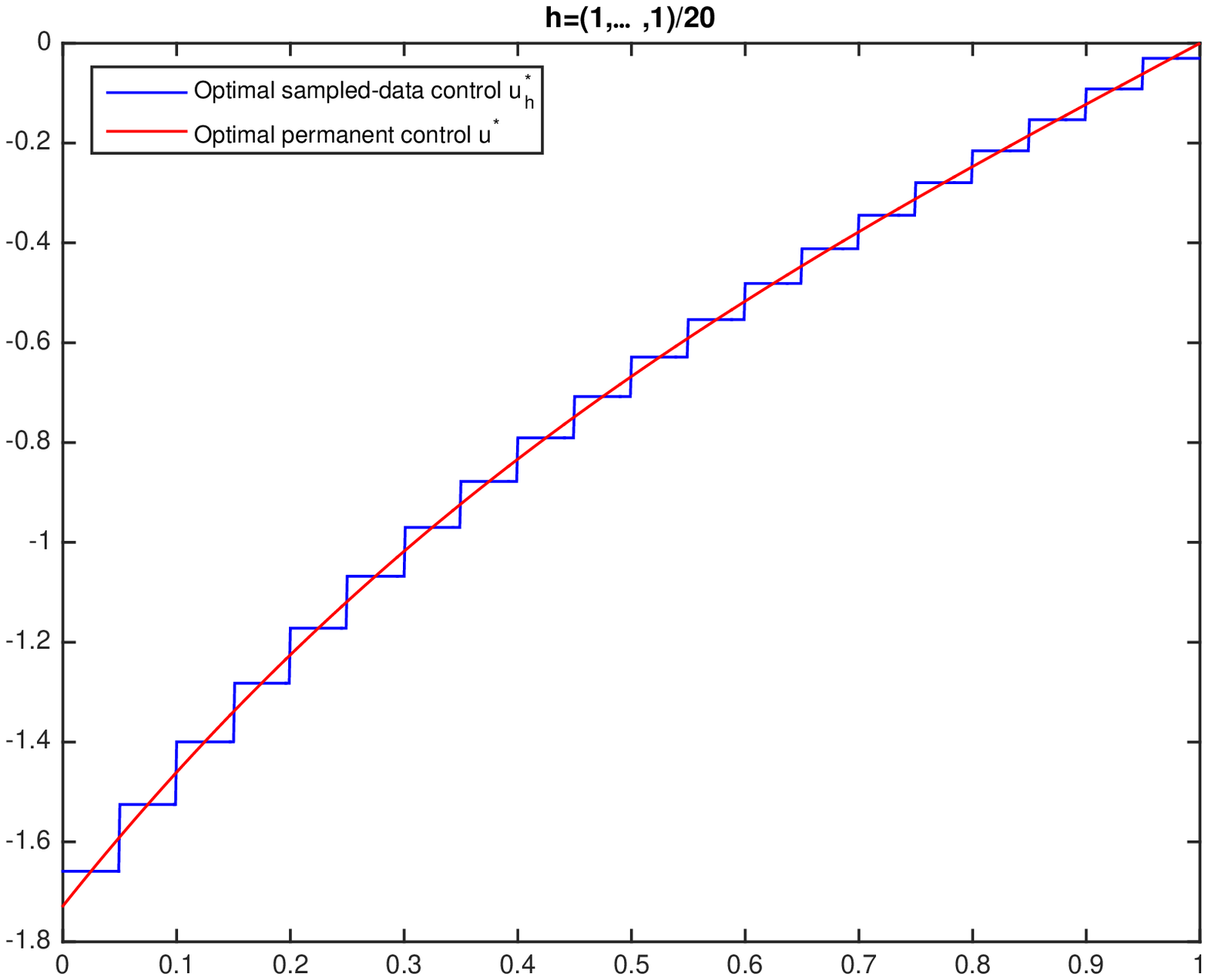}  \includegraphics[scale=0.35]{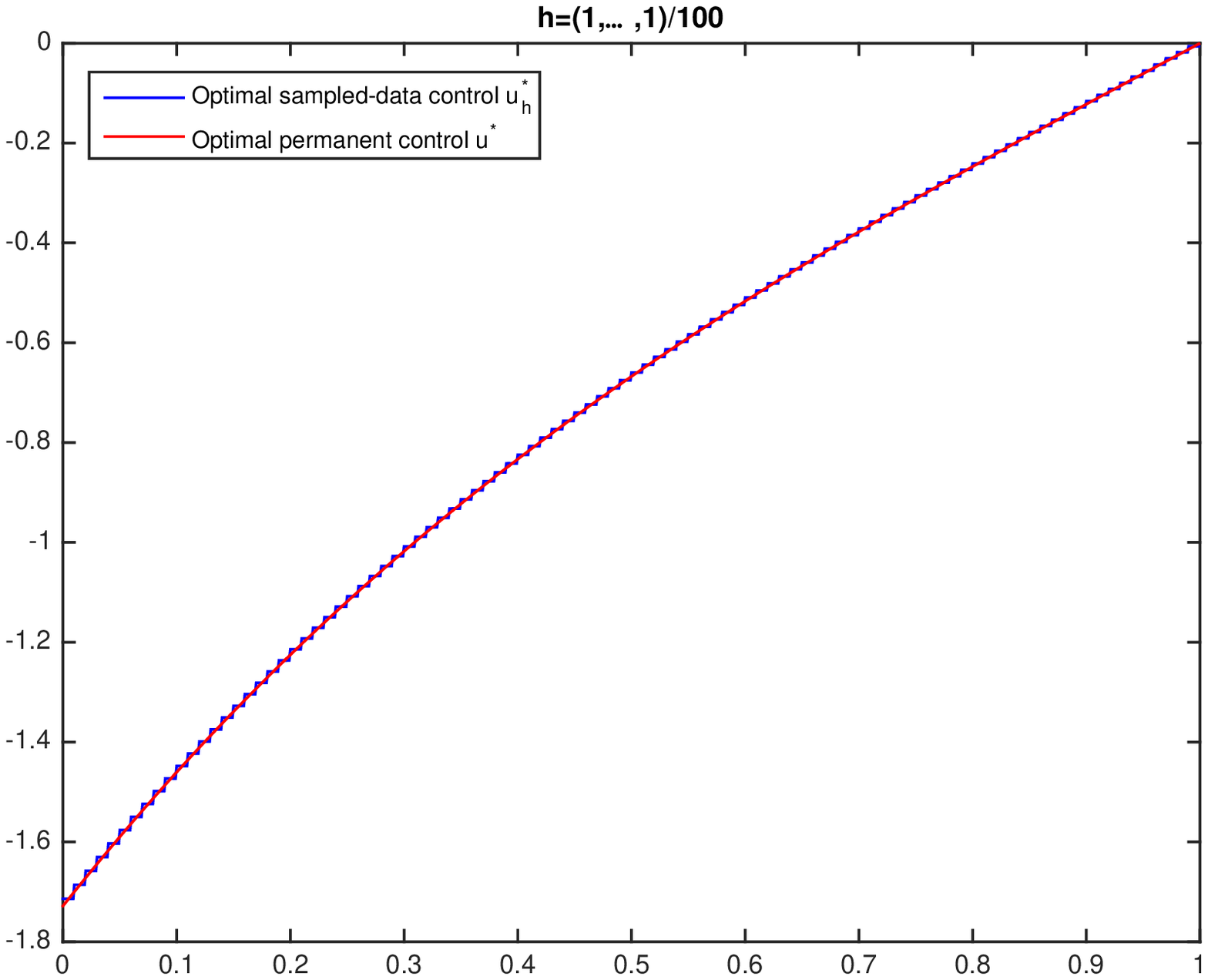}
\caption{Pointwise convergence of $u^*_h$ to $u^*$ as $\Vert h \Vert_\Delta$ tends to $0$.}
\label{figure1}
\end{figure}

Figure~\ref{figure2} represents the sampled-data control $u_h$ introduced in the proof of Theorem~\ref{thmmainresult}. We observe as expected (see Remark~\ref{controlaveraged}) that $u_h \neq u^*_h$.

\begin{figure}[h]
\centering
\includegraphics[scale=0.35]{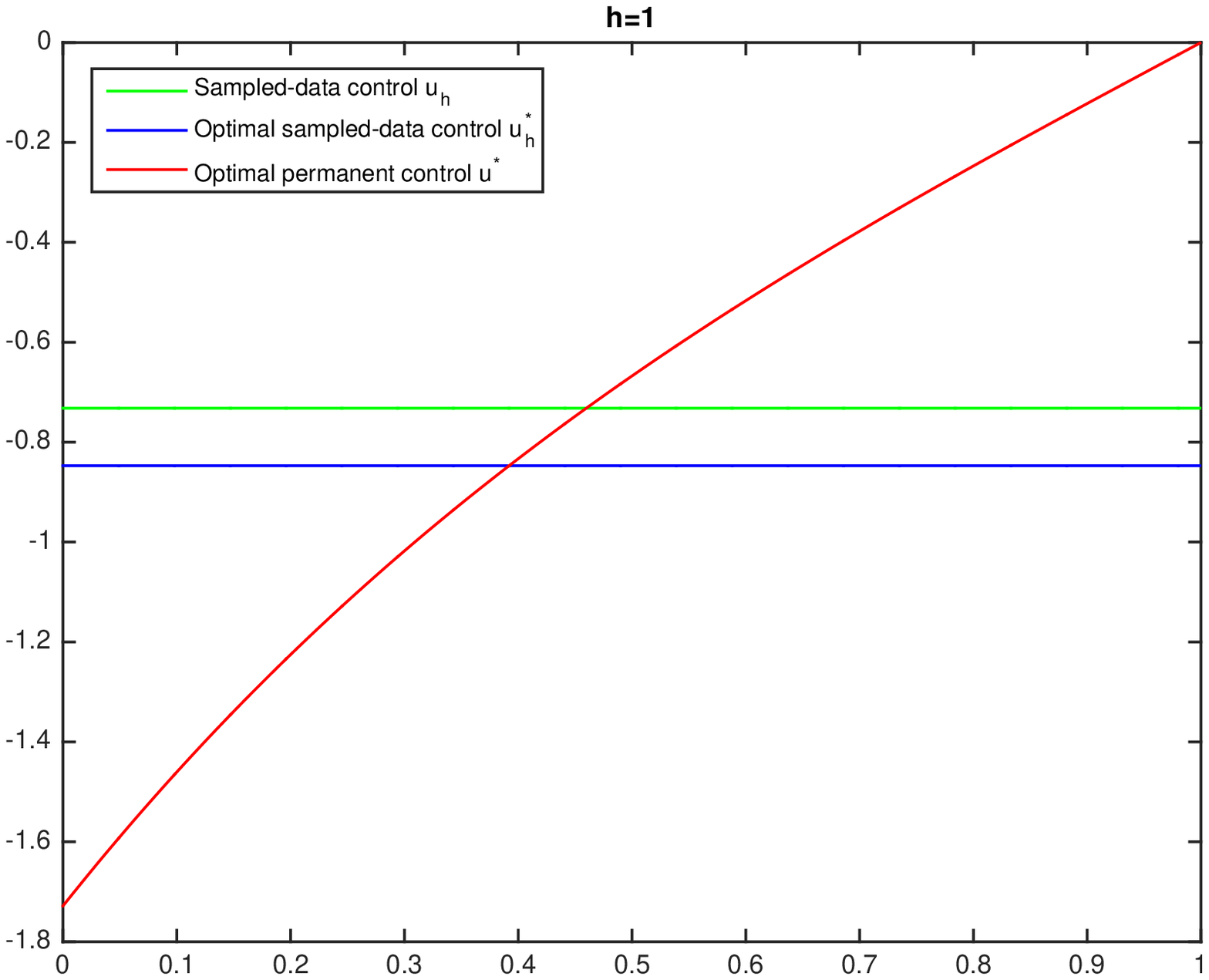}  \includegraphics[scale=0.35]{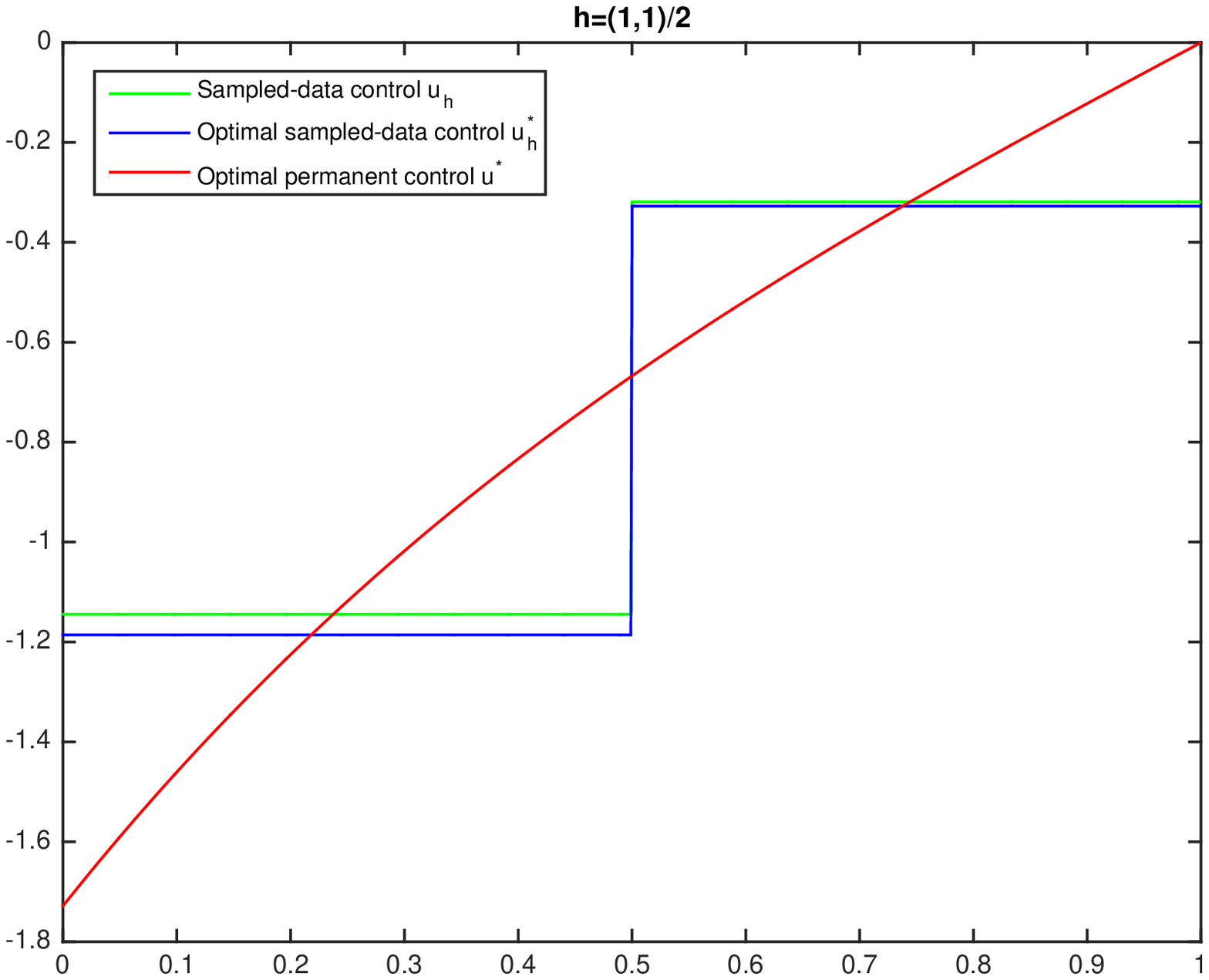}
\caption{In general $u_h \neq u^*_h$.}
\label{figure2}
\end{figure}

\section{Proofs}

\subsection{Proof of Proposition~\ref{thmexistenceunicitepermanent}}\label{annexeproof}

In order to prove Proposition~\ref{thmexistenceunicitepermanent}, we first state and prove two preliminary lemmas, variants of which are well known in the existing literature (see, e.g., \cite{brys,lee,Schattler,trel}). The proofs are given for the sake of completeness.

\begin{lemma}\label{lemequiHolder}
Let $(g_k)_{k \in \N}$ be a sequence of functions defined on $[a,b]$ with values in $\R^n$. We assume that $(g_k)_{k \in \N}$ is equi-H\"olderian in the sense that there exist $\alpha > 0$ and $\beta>0$ such that $ \Vert g_k (t_2) - g_k (t_1) \Vert_n \leq \beta \vert t_2 - t_1 \vert^\alpha $
for every $k \in \N$ and every $t_1$, $t_2 \in [a,b]$. If the sequence $ (g_k)_{k \in \N}$ converges pointwise on $[a,b]$ to $0$, then it converges uniformly on $[a,b]$ to $0$.
\end{lemma}

\begin{proof}
Let $\varepsilon > 0$ and let $a = t_0 < \ldots < t_p = b$ be a partition of $[a,b]$ such that $t_{i+1} - t_i < (\frac{\varepsilon}{2\beta})^{1/\alpha}$ for every $i=0,\ldots,p-1$. From the pointwise convergence, there exists $K \in \N$ such that $\Vert g_k (t_i) \Vert_n < \frac{\varepsilon}{2}$ for every $k \geq K$ and every $i=0,\ldots,p$. Let $k \geq K$. Then, for every $t \in [a,b]$, $t \in [t_i,t_{i+1}]$ for some $i=0,\ldots,p-1$ and it follows that $ \Vert g_k (t) \Vert_n \leq \Vert g_k (t) - g_k (t_i) \Vert_n + \Vert g_k (t_i) \Vert_n < \varepsilon$. The proof is complete.
\end{proof}

\begin{lemma}\label{groslem}
The following properties hold true:
\begin{enumerate}
\item If $u_k \rightharpoonup u$ in $\L^2$, then $q(\cdot,u_k) \rightarrow q(\cdot,u)$ in $\C$ and $p(\cdot,u_k) \rightarrow p(\cdot,u)$ in $\C$.
\item The cost functional $\CC$ is strictly convex on $\L^2$.
\item If $ u_k \rightharpoonup u$ in $\L^2$, then $\liminf_{k \to \infty} \CC(u_k) \geq \CC(u)$.
\item If $ u_k \to u$ in $\L^2$, then $\lim_{k \to \infty} \CC(u_k) = \CC(u)$.
\end{enumerate}
\end{lemma}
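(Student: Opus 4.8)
All four items rest on the Duhamel representation of the solution maps together with the fact that $u\mapsto q(\cdot,u)$ is affine and $\overline{\CC}$ is quadratic. First I would record that, by the variation of constants formula, $q(t,u)=Z(t,a)q_a+\int_a^t Z(t,s)\omega(s)\,ds+\int_a^t Z(t,s)B(s)u(s)\,ds$, so that $q(\cdot,u)$ depends on $u$ only through the last term, which is linear in $u$. For item~1, suppose $u_k\rightharpoonup u$ in $\L^2$. For each fixed $t$ the function $s\mapsto \mathbf{1}_{[a,t]}(s)Z(t,s)B(s)$ belongs to $\L^2$ (entrywise), so testing the weak convergence against its rows gives $q(t,u_k)\to q(t,u)$ pointwise in $t$. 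To upgrade this to uniform convergence I would apply Lemma~\ref{lemequiHolder} to $g_k:=q(\cdot,u_k)-q(\cdot,u)=\int_a^{\cdot}Z(\cdot,s)B(s)(u_k-u)(s)\,ds$: weak convergence forces $\Vert u_k\Vert_{\L^2}$ to be bounded, hence $\Vert u_k-u\Vert_{\L^2}\le C$, and splitting $g_k(t_2)-g_k(t_1)$ (for $t_1<t_2$) into $\int_{t_1}^{t_2}Z(t_2,s)B(s)(u_k-u)(s)\,ds$ and $\int_a^{t_1}\big(Z(t_2,s)-Z(t_1,s)\big)B(s)(u_k-u)(s)\,ds$, then applying Cauchy--Schwarz together with the bound $\vertiii{Z(t_2,s)-Z(t_1,s)}\le \vertiii{A}_\infty\,M_Z\,\vert t_2-t_1\vert$ (which follows from $\dot Z=AZ$, $M_Z$ being a uniform bound on $\vertiii{Z(t,s)}$ over $[a,b]^2$), shows that $(g_k)_k$ is equi-H\"olderian of exponent $1/2$ with a constant independent of $k$. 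Lemma~\ref{lemequiHolder} then yields $q(\cdot,u_k)\to q(\cdot,u)$ in $\C$.

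For the costate, I would write the equation satisfied by $d_k:=p(\cdot,u_k)-p(\cdot,u)$, namely $\dot d_k=-A^\top d_k+W\big(q(\cdot,u_k)-q(\cdot,u)\big)$ with $d_k(b)=-S\big(q(b,u_k)-q(b,u)\big)$ (by Lemma~\ref{propcauchyp}); since the source term tends to $0$ in $\C$ and the terminal value tends to $0$, a Gr\"onwall estimate gives $p(\cdot,u_k)\to p(\cdot,u)$ in $\C$, which finishes item~1. Item~2 follows from the structure of $\CC$: being the composition of the affine map $u\mapsto q(\cdot,u)$ with a quadratic functional, $\CC(u)$ is the sum of the terminal term (a semidefinite quadratic form in the affine quantity $q(b,u)-q_b$, hence convex), the running state term (likewise convex, using $W(\t)\ge 0$), and the running control term $\frac12\int_a^b\langle R(\t)(u(\t)-v(\t)),u(\t)-v(\t)\rangle_m\,d\t$, which is strictly convex on $\L^2$ because $R$ is continuous and positive-definite on the compact interval $[a,b]$, hence uniformly coercive; since a sum of a convex and a strictly convex function is strictly convex, $\CC$ is strictly convex. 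For item~3, write $\CC(u_k)$ as these same three terms: by item~1 the first two converge (they are continuous along $q(\cdot,u_k)\to q(\cdot,u)$ in $\C$), while the running control term is convex and strongly continuous on $\L^2$, hence weakly lower semicontinuous, so its $\liminf$ is at least its value at $u$; adding up gives $\liminf_k\CC(u_k)\ge\CC(u)$. Item~4 is then immediate: strong convergence implies weak convergence, so item~1 applies and the first two terms converge; moreover $u_k\to u$ strongly in $\L^2$ with $R$ bounded gives convergence of the running control term (expand $u_k-v=(u-v)+(u_k-u)$ and bound the cross terms by Cauchy--Schwarz), whence $\CC(u_k)\to\CC(u)$.

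The only genuinely non-routine point is the passage from pointwise to uniform convergence of $q(\cdot,u_k)$ in item~1; everything else is bookkeeping with the Duhamel formula and elementary convexity. I expect the equi-H\"older estimate to be the crux of the argument, and it is precisely why Lemma~\ref{lemequiHolder} was isolated beforehand.
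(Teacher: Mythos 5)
Your proposal is correct and follows essentially the same route as the paper: weak convergence gives pointwise convergence of the Duhamel integrals, Lemma~\ref{lemequiHolder} upgrades this to uniform convergence via an equi-H\"older (exponent $1/2$) estimate, and the cost is split into state terms (which converge because $q(\cdot,u_k)\to q(\cdot,u)$ in $\C$) plus a control term handled by convexity and weak lower semicontinuity. The only cosmetic differences are that the paper applies Lemma~\ref{lemequiHolder} to the simpler function $g_k(t)=\int_a^t B(\t)(u_k-u)(\t)\,d\t$ and then transfers to $q$ by Gr\"onwall, rather than to $q(\cdot,u_k)-q(\cdot,u)$ directly as you do, and that it expresses the weak lower semicontinuity of the control term through the equivalent Hilbert norm $\Vert\cdot\Vert_R$ rather than through the general fact that convex strongly continuous functionals are weakly lower semicontinuous.
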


\begin{proof}
\textbf{\textit{1.}} Let us assume that $ u_k \rightharpoonup u$ in $\L^2$. For every $k \in \N$ and every $t \in [a,b]$, let us define $ g_k (t) := \int_a^t B(\t) (u_k(\t)-u(\t))  d\t$. Since $ u_k \rightharpoonup u$ in $\L^2$, the sequence $ (g_k)_{k \in \N}$ converges pointwise on $[a,b]$ to $0$. Moreover, for every $k \in \N$ and every $t_1$, $t_2 \in [a,b]$, it follows from the classical H\"older inequality that $ \Vert g_k(t_2) - g_k(t_1) \Vert_n \leq \vertiii{B}_{\infty} \Vert u_k - u \Vert_{\L^2} \vert t_2 - t_1 \vert^{1/2} $. Since $ u_k \rightharpoonup u$ in $\L^2$, the term $\Vert u_k - u \Vert_{\L^2}$ is bounded and it follows that the sequence $(g_k)_{k \in \N}$ is equi-H\"olderian. From Lemma~\ref{lemequiHolder}, the sequence $ (g_k)_{k \in \N}$ converges uniformly on $[a,b]$ to $0$. Finally, the classical Gronwall lemma leads to $ \Vert q(t,u_k) - q(t,u) \Vert_n \leq \Vert g_k (t) \Vert_n + \vertiii{A}_{\infty} \int_a^t \Vert q(\t,u_k) - q(\t,u) \Vert_n \; d\t \leq \Vert g_k \Vert_{\infty} e^{\vertiii{A}_{\infty} (b-a)}$ for every $t \in [a,b]$ and every $k \in \N$. We deduce that $q(\cdot,u_k) \rightarrow q(\cdot,u)$ in $\C$. One can similarly derive that $p(\cdot,u_k) \rightarrow p(\cdot,u)$ in $\C$.

\textbf{\textit{2.}} Since $S$ is positive-semidefinite, $W(t)$ is positive-semidefinite and $R(t)$ is positive-definite for every $t \in [a,b]$, the functional $\overline{\CC}$ is clearly convex in the variable $q$ and strictly convex in the variable $u$. Moreover we have $ q(\cdot,\lambda u_1 + (1-\lambda) u_2) = \lambda q(\cdot,u_1) + (1-\lambda) q(\cdot ,u_2)$ for every $u_1$, $u_2 \in \L^2$ and every $\lambda \in [0,1]$. One can easily deduce the strict convexity of the cost functional $\CC$ from these facts.

\textbf{\textit{3.}}  From the hypotheses on $R$, one can easily prove by contradiction that there exists a constant $c_R > 0$ such that $ c_R \Vert z \Vert^2_m \leq \langle R(t)z,z \rangle_m \leq \vertiii{R}_{\infty} \Vert z \Vert^2_m$ for every $t \in [a,b]$ and every $z \in \R^m$. As a consequence, the scalar product $\langle \cdot , \cdot \rangle_R$ defined on $\L^2$ by
$$ \langle u_1 , u_2 \rangle_R := \int_a^b \big\langle R(\t)u_1(\t),u_2 (\t) \big\rangle_m \; d\t $$
induces a norm $\Vert \cdot \Vert_R$ on $\L^2$ that is equivalent to the usual one. Let us assume that $ u_k \rightharpoonup u$ in $\L^2$. Since $q(\cdot,u_k) \rightarrow q(\cdot,u)$ in $\C$, we have
\begin{multline*}
 \lim\limits_{k \to \infty} \dfrac{1}{2} \Big\langle S \big( q(b,u_k)-q_b \big) , q(b,u_k)-q_b \Big\rangle_n + \dfrac{1}{2} \di \int_a^b \Big\langle W(\t) \big( q(\t,u_k)-x(\t) \big) , q(\t,u_k) -x(\t) \Big\rangle_n \; d\t  \\ = \dfrac{1}{2} \Big\langle S \big( q(b,u)-q_b \big) , q(b,u)-q_b \Big\rangle_n + \dfrac{1}{2} \di \int_a^b \Big\langle W(\t) \big( q(\t,u)-x(\t) \big) , q(\t,u) -x(\t) \Big\rangle_n \; d\t.
\end{multline*}
Moreover, since $ u_k \rightharpoonup u$ in $\L^2$, we have $ \liminf_{k \to \infty} \frac{1}{2} \Vert u_k - v \Vert^2_R \geq \frac{1}{2} \Vert u - v \Vert^2_R $. This concludes the proof.

\textbf{\textit{4.}} The proof is  similar since $ u_k \to u$ in $\L^2$ implies that $\lim_{k \to \infty} \frac{1}{2} \Vert u_k -v\Vert^2_R = \frac{1}{2} \Vert u-v \Vert^2_R$.
\end{proof}

We are now in a position to prove Proposition~\ref{thmexistenceunicitepermanent}. Let us prove that $\CC$ has a unique minimizer on $\E$. Uniqueness is clear since $\E$ is convex and $\CC$ is strictly convex (see Lemma~\ref{groslem}). Now let us prove existence. Let $(u_k)_{k \in \N} \subset \E$ be a minimizing sequence of $\CC$ on $\E$. Since $\CC(u_k) \geq \frac{1}{2} \Vert u_k-v \Vert^2_R$, we conclude that $(u_k)_{k \in \N}$ is bounded in $\L^2$ and thus converges weakly, up to some subsequence, to some $u^*_\E$. Since $\E$ is weakly closed, we get that $u^*_\E \in \E$. Finally, from Lemma~\ref{groslem}, we get that $ \inf_{u \in \E}  \CC(u) =  \lim_{k \to \infty} \CC(u_k) = \liminf_{k \to \infty} \CC(u_k) \geq \CC(u^*_\E)$ which concludes the proof.

\subsection{Preliminaries for Theorem~\ref{thmmainresult2} and value function}\label{secVF}
In this section, we establish preliminary results that are required to prove Theorem~\ref{thmmainresult2}. Precisely they are required in order to prove the invertibility of the matrices $\TT_i$ in the first proof of Theorem~\ref{thmmainresult2} (based on Proposition~\ref{proppmpsample} and detailed in Section~\ref{secfirstproof}), and to prove the dynamic programming principle which is the basis of the second proof of Theorem~\ref{thmmainresult2} (detailed in Section~\ref{secsecondproof}).

The reader who would be interested in the proof of Theorem~\ref{thmmainresult2} based on Proposition~\ref{proppmpsample} and who wants to skip technical difficulties related to the invertibility of the matrices $\TT_i$ may switch directly to Section~\ref{secfirstproof}.

For all $0 \leq j < k \leq N$ and $U = (U_i )_{i=j,\ldots,k-1} \in (\R^m)^{k-j}$, we define the function $u_U : [s_j,s_{k}) \to \R^m$ by $u_U :=  \sum_{i=j}^{k-1} U_{i} \; \mathbf{1}_{[s_i,s_{i+1})}$. For every $y \in \R^n$, we denote by $q(\cdot,j,k,y,U) : [s_j,s_{k}] \rightarrow \R^n$ the unique absolutely continuous solution of the linear Cauchy problem
\begin{equation*}
\left\{\begin{split}
& \dot q(t) = A(t) q(t) + B(t) u_U(t) + \omega (t), \quad \text{a.e. $t \in [s_j,s_{k}]$,}  \\
& q(s_j)=y .
\end{split}\right.
\end{equation*}
For every $0 \leq j \leq N-1$, we define the nonnegative function $\overline{\mathcal{V}}_j (\cdot,\cdot) : \R^n \times (\R^m)^{N-j} \to \R^+$ by
\begin{multline}\label{eqdefvbar}
\overline{\mathcal{V}}_j ( y , U ) := \dfrac{1}{2} \Big\langle S \big( q(b,j,N,y,U)-q_b \big) , q(b,j,N,y,U)-q_b \Big\rangle_n \\[5pt]
+ \dfrac{1}{2} \di \int_{s_j}^{b} \Big\langle W(\t) \big( q(\t,j,N,y,U)-x(\t) \big) , q(\t,j,N,y,U) -x(\t) \Big\rangle_n \\
+ \Big\langle R(\t) \big( u_U(\t)-v(\t) \big) , u_U(\t)-v(\t) \Big\rangle_m \; d\t ,
\end{multline}
for every $(y,U) \in \R^n \times (\R^m)^{N-j}$.

\begin{remark}\label{remreplace}
Note that $\overline{\mathcal{V}}_j ( y , U )$ coincides with the cost $\CC(u_U)$ whenever the initial time $a$ is replaced by $s_j$ and the initial condition $q_a$ is replaced by $y$ and $h$ is replaced by $(h_i)_{i=j,\ldots,N-1}$ in Problem~($\mathscr{P}_{\E_h}$).
\end{remark}

In the sequel, we set
$$ \Phi_{j,y}(U_j) := q ( s_{j+1} , j , j+1, y , U_j ) \quad \text{and} \quad  \tilde{U} := (U_i)_{i=j+1,\ldots,N-1} , $$
for every $0 \leq j \leq N-1$ and every $(y,U) \in \R^n \times (\R^m)^{N-j}$. The next statement obviously follows from the definition of $\overline{\mathcal{V}}_j (\cdot,\cdot)$.

\begin{lemma}\label{lemlem1}
For every $0 \leq j \leq N-2$ and every $(y,U) \in \R^n \times (\R^m)^{N-j}$, we have
\begin{multline*}
\overline{\mathcal{V}}_j ( y , U ) = \overline{\mathcal{V}}_{j+1} \big( \Phi_{j,y}(U_j) , \tilde{U} \big) \\[7pt]
+ \dfrac{1}{2} \di \int_{s_j}^{s_{j+1}} \Big\langle W(\t) \big( q(\t,j,j+1,y, U_j )-x(\t) \big) , q(\t,j,j+1,y, U_j ) -x(\t) \Big\rangle_n \\
+ \Big\langle R(\t) \big( U_j -v(\t) \big) ,U_j -v(\t) \Big\rangle_m \; d\t .
\end{multline*}
\end{lemma}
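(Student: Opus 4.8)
The plan is to verify Lemma~\ref{lemlem1} by a direct splitting of the cost integral defining $\overline{\mathcal{V}}_j$ at the sampling time $s_{j+1}$, combined with a uniqueness argument for the controlled trajectory on the two subintervals. First I would observe that the control $u_U$ associated to $U = (U_i)_{i=j,\ldots,N-1}$ restricts, on the subinterval $[s_{j+1},b)$, to precisely the control $u_{\tilde U}$ associated to $\tilde U = (U_i)_{i=j+1,\ldots,N-1}$, and on the subinterval $[s_j,s_{j+1})$ to the constant control $U_j$. By the Cauchy--Lipschitz uniqueness statement underlying the definition of $q(\cdot,j,k,y,U)$ (cf. Lemma~\ref{propcauchyq}), the solution $q(\cdot,j,N,y,U)$ on $[s_j,s_{j+1}]$ coincides with $q(\cdot,j,j+1,y,U_j)$, so in particular $q(s_{j+1},j,N,y,U) = q(s_{j+1},j,j+1,y,U_j) = \Phi_{j,y}(U_j)$; and then, again by uniqueness, on $[s_{j+1},b]$ the solution $q(\cdot,j,N,y,U)$ coincides with $q(\cdot,j+1,N,\Phi_{j,y}(U_j),\tilde U)$, since both solve the same linear Cauchy problem on $[s_{j+1},b]$ with the same initial value at $s_{j+1}$ and the same control $u_{\tilde U}$.

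Granting these two identifications, I would simply split the integral $\int_{s_j}^b = \int_{s_j}^{s_{j+1}} + \int_{s_{j+1}}^b$ in the definition~\eqref{eqdefvbar} of $\overline{\mathcal{V}}_j(y,U)$. The terminal term $\frac{1}{2}\langle S(q(b,\cdot)-q_b),q(b,\cdot)-q_b\rangle_n$ together with the integral over $[s_{j+1},b]$ is, by the trajectory identification above and the definition~\eqref{eqdefvbar} applied with index $j+1$, initial point $\Phi_{j,y}(U_j)$ and control data $\tilde U$, exactly $\overline{\mathcal{V}}_{j+1}(\Phi_{j,y}(U_j),\tilde U)$. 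The remaining integral over $[s_j,s_{j+1}]$, in which $u_U$ equals the constant $U_j$ and $q(\cdot,j,N,y,U)$ equals $q(\cdot,j,j+1,y,U_j)$, is precisely the last line of the claimed formula. Assembling the pieces gives the identity.

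This lemma is essentially bookkeeping, so I do not expect a genuine obstacle; the only point that needs a word of care is the concatenation/restriction property of solutions of the linear Cauchy problem, i.e. that restricting $q(\cdot,j,N,y,U)$ to $[s_{j+1},b]$ yields the solution with initial condition at time $s_{j+1}$ equal to its value there. This is immediate from existence and uniqueness (Lemma~\ref{propcauchyq}) since the right-hand side $A(t)q+B(t)u_U(t)+\omega(t)$ depends only on the restriction of the data to the relevant interval and $u_U$ restricts correctly across the sampling time $s_{j+1}$. Everything else is a linear splitting of integrals, so the proof can be written in a few lines.
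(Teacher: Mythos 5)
Your argument is correct and is exactly the one the paper has in mind: the paper gives no written proof, stating only that the lemma ``obviously follows from the definition'' of $\overline{\mathcal{V}}_j(\cdot,\cdot)$, and the details it leaves implicit are precisely your splitting of the integral at $s_{j+1}$ together with the restriction/concatenation property of solutions of the linear Cauchy problem. Nothing is missing.
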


Finally, for every $0 \leq j \leq N-1$, we define the so-called \textit{value function} $\mathcal{V}_j (\cdot)  : \R^n \to \R^+$ as the nonnegative function given by
\begin{equation}\label{eqdefVF}
\mathcal{V}_j (y) := \inf_{U \in (\R^m)^{N-j}} \overline{\mathcal{V}}_j (y,U),
\end{equation}
for every $y \in \R^n$. From Remark~\ref{remreplace} and similarly to Proposition~\ref{thmexistenceunicitepermanent}, one can easily prove that the infimum $\mathcal{V}_j(y)$ is reached at a unique point denoted by $U_j(y)^* = (U_j(y)^*_i)_{i=j,\ldots,N-1} \in (\R^m)^{N-j}$. As a consequence, we have
\begin{equation}\label{eq743}
\mathcal{V}_j (y) = \overline{\mathcal{V}}_j \big( y , U_j(y)^* \big),
\end{equation}
for every $0 \leq j \leq N-1$ and every $y \in \R^n$.

\begin{remark}\label{rem879}
From Remark~\ref{remreplace}, we have $  \CC(u^*_h) = \mathcal{V}_0(q_a) $ and $ (U^*_{h,i})_{i=0,\ldots,N-1} = U_0(q_a)^* $. More generally we have $ (U^*_{h,i})_{i=j,\ldots,N-1} = U_j( q(s_j,u^*_h) )^* $ for every $0 \leq j \leq N-1$.
\end{remark}

The following statement follows from the definition of $U_j(y)^*$.

\begin{lemma}\label{lemlem2}
For every $0 \leq  j \leq N-2$ and every $y \in \R^n$, we have
$$ U_{j+1} \Big( \Phi_{j,y} \big( U_j(y)^*_j \big) \Big)^* = \Big( U_j(y)^*_{i} \Big)_{i=j+1,\ldots,N-1} \in (\R^m)^{N-(j+1)}. $$
\end{lemma}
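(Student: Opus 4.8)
The plan is to deduce the statement from the additive decomposition of Lemma~\ref{lemlem1}, which is exactly Bellman's principle of optimality in disguise, together with the uniqueness of the minimizer of $\overline{\mathcal{V}}_{j+1}(y,\cdot)$ recalled just before~\eqref{eq743}. Fix $0 \le j \le N-2$ and $y \in \R^n$, and write for brevity $U^* := U_j(y)^* = (U^*_i)_{i=j,\ldots,N-1}$ and $y' := \Phi_{j,y}(U^*_j) \in \R^n$, i.e. $y'$ is the state reached at time $s_{j+1}$ after applying the first piece $U^*_j$ of the optimal control for $\overline{\mathcal{V}}_j(y,\cdot)$. The goal is to show that the tail $(U^*_i)_{i=j+1,\ldots,N-1}$ is precisely the unique minimizer $U_{j+1}(y')^*$ of $\overline{\mathcal{V}}_{j+1}(y',\cdot)$ on $(\R^m)^{N-(j+1)}$.

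First I would record the following consequence of Lemma~\ref{lemlem1}: for any $\tilde{W} = (W_i)_{i=j+1,\ldots,N-1} \in (\R^m)^{N-(j+1)}$, let $U \in (\R^m)^{N-j}$ be the concatenated control with $U_j := U^*_j$ and $U_i := W_i$ for $i=j+1,\ldots,N-1$, so that $\tilde{U} = \tilde{W}$ and $\Phi_{j,y}(U_j) = \Phi_{j,y}(U^*_j) = y'$. Then Lemma~\ref{lemlem1} gives
$$ \overline{\mathcal{V}}_j(y,U) = \overline{\mathcal{V}}_{j+1}(y',\tilde{W}) + c, $$
where $c$ is the stage cost over $[s_j,s_{j+1})$ appearing in Lemma~\ref{lemlem1}; the essential point is that $c$ depends only on $y$ and $U^*_j$ and not on the tail $\tilde{W}$. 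Taking in particular $\tilde{W} = (U^*_i)_{i=j+1,\ldots,N-1}$ (so that $U = U^*$) yields $\overline{\mathcal{V}}_j(y,U^*) = \overline{\mathcal{V}}_{j+1}\big(y',(U^*_i)_{i=j+1,\ldots,N-1}\big) + c$.

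Next I would argue by contradiction. If $(U^*_i)_{i=j+1,\ldots,N-1}$ were not a minimizer of $\overline{\mathcal{V}}_{j+1}(y',\cdot)$, there would be some $\tilde{W}$ with $\overline{\mathcal{V}}_{j+1}(y',\tilde{W}) < \overline{\mathcal{V}}_{j+1}\big(y',(U^*_i)_{i=j+1,\ldots,N-1}\big)$; concatenating $U^*_j$ with this $\tilde{W}$ and using the displayed identity together with the value of $\overline{\mathcal{V}}_j(y,U^*)$ above would produce a control $U \in (\R^m)^{N-j}$ with $\overline{\mathcal{V}}_j(y,U) < \overline{\mathcal{V}}_j(y,U^*) = \mathcal{V}_j(y)$, contradicting the optimality of $U^* = U_j(y)^*$. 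Hence $(U^*_i)_{i=j+1,\ldots,N-1}$ minimizes $\overline{\mathcal{V}}_{j+1}(y',\cdot)$, and since that minimizer is unique, it must coincide with $U_{j+1}(y')^* = U_{j+1}\big(\Phi_{j,y}(U_j(y)^*_j)\big)^*$, which is exactly the claimed identity.

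I do not expect any genuine obstacle here: the argument is the classical principle of optimality, and everything reduces to the decomposition already provided by Lemma~\ref{lemlem1}. The only points deserving a line of care are bookkeeping ones — verifying that the concatenated control really steers $y$ at time $s_j$ to $y'$ at time $s_{j+1}$ (so that $\overline{\mathcal{V}}_{j+1}$ is evaluated at the correct initial state $y'$), and that the stage cost $c$ is genuinely independent of the tail; both are immediate from the definitions of $u_U$, $\Phi_{j,y}$ and the statement of Lemma~\ref{lemlem1}.
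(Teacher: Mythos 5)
Your argument is correct and is exactly the reasoning the paper intends: the paper states Lemma~\ref{lemlem2} without proof, asserting only that it follows from the definition of $U_j(y)^*$, and your concatenation argument via the decomposition of Lemma~\ref{lemlem1} (with the stage cost independent of the tail) combined with the uniqueness of the minimizer of $\overline{\mathcal{V}}_{j+1}(y',\cdot)$ is the standard way to fill in that omitted detail. There are no gaps.
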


Finally, from \eqref{eq743}, Lemmas~\ref{lemlem1} and \ref{lemlem2}, we infer the following result.

\begin{proposition}\label{propfauxDPP}
For every $0 \leq  j \leq N-2$ and every $y \in \R^n$, we have
\begin{multline*}
\mathcal{V}_j (y) = \mathcal{V}_{j+1} \Big(  \Phi_{j,y} \big( U_j(y)^*_j \big) \Big) \\[7pt]
+  \dfrac{1}{2} \di \int_{s_j}^{s_{j+1}} \Big\langle W(\t) \big( q(\t,j,j+1,y, U_j(y)^*_j)-x(\t) \big) , q(\t,j,j+1,y, U_j(y)^*_j  ) -x(\t) \Big\rangle_n \\
+ \Big\langle R(\t) \big( U_j(y)^*_j  -v(\t) \big) ,U_j(y)^*_j  -v(\t) \Big\rangle_m \; d\t .
\end{multline*}
\end{proposition}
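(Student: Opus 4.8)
The plan is to obtain the identity by a short chain of substitutions, using only the already established Lemmas~\ref{lemlem1} and~\ref{lemlem2} together with the characterization \eqref{eq743} of the value function as the cost evaluated at its (unique) minimizer.

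First I would fix $0 \le j \le N-2$ and $y \in \R^n$, and abbreviate $U := U_j(y)^* = (U_i)_{i=j,\ldots,N-1}$, the unique minimizer of $\overline{\mathcal{V}}_j(y,\cdot)$, so that $\mathcal{V}_j(y) = \overline{\mathcal{V}}_j(y,U)$ by \eqref{eq743}. Next I would apply Lemma~\ref{lemlem1} to this particular $U$: it splits $\overline{\mathcal{V}}_j(y,U)$ as the sum of $\overline{\mathcal{V}}_{j+1}\big(\Phi_{j,y}(U_j),\tilde{U}\big)$, with $\tilde{U} = (U_i)_{i=j+1,\ldots,N-1}$, and the one-step running cost over $[s_j,s_{j+1})$ associated with the control value $U_j$ and the arc $q(\cdot,j,j+1,y,U_j)$. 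Since here $U_j = U_j(y)^*_j$, this running-cost term is exactly the integral appearing in the statement, so no further manipulation of it is needed.

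It then remains only to identify $\overline{\mathcal{V}}_{j+1}\big(\Phi_{j,y}(U_j(y)^*_j),\tilde{U}\big)$ with $\mathcal{V}_{j+1}\big(\Phi_{j,y}(U_j(y)^*_j)\big)$. This is precisely where Lemma~\ref{lemlem2} enters: it states that the truncated control $\tilde{U} = (U_j(y)^*_i)_{i=j+1,\ldots,N-1}$ equals the unique minimizer $U_{j+1}\big(\Phi_{j,y}(U_j(y)^*_j)\big)^*$ of $\overline{\mathcal{V}}_{j+1}\big(\Phi_{j,y}(U_j(y)^*_j),\cdot\big)$. Applying \eqref{eq743} once more, now at index $j+1$ and at the point $\Phi_{j,y}(U_j(y)^*_j)$, gives $\overline{\mathcal{V}}_{j+1}\big(\Phi_{j,y}(U_j(y)^*_j),\tilde{U}\big) = \mathcal{V}_{j+1}\big(\Phi_{j,y}(U_j(y)^*_j)\big)$, and substituting back yields the claimed equality.

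There is no genuine obstacle: the only point requiring care is the bookkeeping of the truncated control $\tilde{U}$ and of the reached state $\Phi_{j,y}(U_j(y)^*_j)$ — that is, the fact that \emph{the tail of the optimizer is the optimizer of the tail problem}. This is exactly the content of Lemma~\ref{lemlem2}, which itself rests on uniqueness of minimizers on the shorter horizon (Proposition~\ref{thmexistenceunicitepermanent} via Remark~\ref{remreplace}). Hence, once Lemmas~\ref{lemlem1} and~\ref{lemlem2} are available, the proof is essentially a three-line computation.
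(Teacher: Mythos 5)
Your proof is correct and is exactly the argument the paper intends: the paper derives Proposition~\ref{propfauxDPP} by combining \eqref{eq743} with Lemmas~\ref{lemlem1} and~\ref{lemlem2}, and your three substitutions (evaluate $\mathcal{V}_j(y)$ at the minimizer, split via Lemma~\ref{lemlem1}, identify the tail term via Lemma~\ref{lemlem2} and \eqref{eq743} at index $j+1$) spell out precisely that chain. No gaps.
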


Proposition~\ref{propfauxDPP} will be used in order to prove the invertibility of the matrices $\TT_i$ in the first proof of Theorem~\ref{thmmainresult2} (based on Proposition~\ref{proppmpsample} and detailed in Section~\ref{secfirstproof}).

\begin{remark}
Proposition~\ref{propfauxDPP} does not correspond to the dynamic programming principle, whose version adapted to the framework of this paper is stated in Proposition~\ref{propDPP} (see Section~\ref{secsecondproof}).
\end{remark}

\subsection{Proof of Theorem~\ref{thmmainresult2} based on Proposition~\ref{proppmpsample}}\label{secfirstproof}

From Proposition~\ref{proppmpsample} we have
\begin{equation}\label{eq980}
\RR_i U^*_{h,i} = \RV_i + \int_{s_i}^{s_{i+1}} B(s)^\top p(s , u^*_h) \; ds,
\end{equation}
for all $i=0,\ldots,N-1$. In order to prove Theorem~\ref{thmmainresult2} using \eqref{eq980}, we use the Duhamel formula in order to derive an explicit expression for $p(s,u^*_h)$ as a function of $q(s_i,u^*_h)$ and $U^*_{h,i}$.
We will prove by backward induction that the following five statements are true:
\begin{enumerate}
\item $\TT_i$ is invertible;
\item $U^*_{h,i} = - {\TT}_i^{-1} ( \PP_i q (s_i , u^*_h) + \HH_i) $;
\item $p(s_i,u^*_h) = - ( \KK_{i} q(s_i,u^*_h) + \JJ_i )$;
\item $\mathcal{V}_i(y) = \frac{1}{2} \langle \KK_i y , y \rangle_n + \langle \JJ_i , y \rangle_n + \frac{1}{2} \YY_i$ for all $y \in \R^n$;
\item $\KK_{i}$ is positive-semidefinite;
\end{enumerate}
for every $i=N-1,\ldots,0$. As explained at the beginning of Section~\ref{secVF}, the reader who would like to skip, at least in a first step, technical difficulties related to the invertibility of the matrices $\TT_i$, may focus only on the statements 2 and 3 above.

To prove the induction steps, let us first recall the following equalities that follow from the  Duhamel formula:
\begin{equation}\label{eqduhamelp}
p(s,u^*_h) = Z(s_{i+1},s)^\top p(s_{i+1},u^*_h) - \int_{s}^{s_{i+1}} Z(\t,s)^\top W(\t) \big( q(\t, u^*_h)-x(\t) \big) \; d\t  ,
\end{equation}
\begin{equation}\label{eqduhamelq}
q(\t,u^*_h) = Z(\t,s_i) q(s_i,u^*_h) + \int_{s_i}^\t Z(\t,\xi) B(\xi) \; d\xi \; U^*_{h,i} + \int_{s_i}^\t Z(\t,\xi) \omega(\xi) \; d\xi ,
\end{equation}
for every $i=0,\ldots,N-1$ and for every $s$, $\t \in [s_i,s_{i+1}]$.

\paragraph{Initialization of the backward induction.}
Let $i=N-1$.

\textit{\textbf{1.}} Since $\KK_{i+1} = S$ is positive-semidefinite, we infer that $\TT_{i}$ is invertible (see Remark~\ref{remTiinv}).

\textit{\textbf{2.}} Using \eqref{eqduhamelp} and \eqref{eqduhamelq} and $p(s_{i+1},u^*_h) = p(b,u^*_h) = -S (q(b,u^*_h) - q_b) = -S (q(s_{i+1},u^*_h) - q_b) $ (see Lemma~\ref{propcauchyp}), we get that
\begin{multline}\label{eq392}
p(s,u^*_h) = - \Big( Z({s_{i+1}},s)^\top S Z({s_{i+1}},s_i) + \int_s^{s_{i+1}} Z(\t,s)^\top W(\t) Z(\t,s_i) \; d\t \Big) q(s_i,u^*_h) \\
- \Big( Z({s_{i+1}},s)^\top S \ZB_i + \int_s^{s_{i+1}} Z(\t,s)^\top W(\t) \int_{s_i}^\t Z(\t,\xi) B(\xi) \; d\xi \; d\t \Big) U^*_{h,i} \\
- \Big( Z({s_{i+1}},s)^\top S \ZO_i + \int_s^{s_{i+1}} Z(\t,s)^\top W(\t) \left( \int_{s_i}^\t Z(\t,\xi) \omega(\xi) \; d\xi - x(\t) \right) \; d\t \Big).
\end{multline}
Replacing $p(s,u^*_h)$ in Equality~\eqref{eq980} and applying the Fubini theorem, we obtain that
\begin{multline*}
\RR_i U^*_{h,i} = \RV_i - \Big( \ZB_i^\top S Z(s_{i+1},s_i) + \ZBWZ_i \Big) q(s_i,u^*_h) \\ - \Big( \ZB_i^\top S \ZB_i + \ZBWZB_i \Big)U^*_{h,i} - \Big( \ZB_i^\top S \ZO_i + \ZBWZOX_i \Big),
\end{multline*}
that is exactly $ \TT_i U^*_{h,i} = - \PP_i q (s_i , u^*_h) - \HH_i $. Since $\TT_i$ is invertible, we infer that
$$ U^*_{h,i} = - {\TT}_i^{-1} \big( \PP_i q (s_i , u^*_h) + \HH_i \big) .$$

\textit{\textbf{3.}} Taking $s=s_i$ in \eqref{eq392} leads to
\begin{multline*}
p(s_i,u^*_h) = - \Big( Z({s_{i+1}},s_i)^\top S Z({s_{i+1}},s_i) + \ZWZ_i \Big) q(s_i,u^*_h) \\
- \Big( Z({s_{i+1}},s_i)^\top S \ZB_i + \ZBWZ_i^\top \Big) U^*_{h,i} - \Big( Z({s_{i+1}},s_i)^\top S \ZO_i + \ZWZOX_i \Big),
\end{multline*}
that is exactly $p(s_i,u^*_h) = - \QQ_i q(s_i,u^*_h) - \PP_i^\top U^*_{h,i} - \GG_i$. Since $U^*_{h,i} = - {\TT}_i^{-1} ( \PP_i q (s_i , u^*_h) + \HH_i )$, we conclude that
$$ p(s_i,u^*_h) = - \big( \KK_{i} q(s_i,u^*_h) + \JJ_i \big). $$

\textit{\textbf{4.}} Let $y \in \R^n$. From Remarks~\ref{termdependant} and \ref{remreplace} and from the definition of $U_i(y)^*$, similarly to Step 2, we get that $U_i(y)^* = - {\TT}_i^{-1} ( \PP_i y + \HH_i ) \in \R^m$. Besides, it follows from the Duhamel formula that
$$ q(\t,i,i+1,y,U_i(y)^*) = Z(\t,s_i)y + \int_{s_i}^\t Z(\t,\xi) B(\xi) \; d\xi \; U_i(y)^* + \int_{s_i}^\t Z(\t,\xi) \omega(\xi) \; d\xi, $$
for every $\t \in [s_i,b]$. Using the above equality in \eqref{eqdefvbar}, we exactly obtain that
$$ \mathcal{V}_i(y) = \overline{\mathcal{V}}_i (y,U_i(y)^*) = \left\langle \frac{1}{2} \TT_i U_i(y)^* + \PP_i y + \HH_i , U_i(y)^* \right\rangle_m + \left\langle \frac{1}{2} \QQ_i y + \GG_i , y \right\rangle_n +  \frac{1}{2} \FF_i. $$
Since $U_i(y)^* = - {\TT}_i^{-1} ( \PP_i y + \HH_i )$, we conclude that
$$ \mathcal{V}_i(y) = \frac{1}{2} \langle \KK_i y , y \rangle_n + \langle \JJ_i , y \rangle_n + \frac{1}{2} \YY_i .$$

\textit{\textbf{5.}} Let $y \in \R^n$ and let us consider temporarily the homogeneous Problem~$(\mathscr{P}_{\E_h})$, that is, let us consider temporarily that $q_b = x(t) = \omega (t) = 0_{\R^n}$ and $v(t)= 0_{\R^m}$ for every $t \in [a,b]$. From Remarks~\ref{cashomogene} and \ref{Kindependant}, similarly to Step 4, we get, in the homogeneous case, that $ \mathcal{V}_i(y) = \frac{1}{2} \langle \KK_i y , y \rangle_n \geq 0$. It follows that $\KK_i$ is positive-semidefinite.

\paragraph{The induction step.} Let $i \in \{ 0, \ldots ,N-2 \}$ and let us assume that the five statements are satisfied at steps $i+1, \ldots ,N-1$.

\textit{\textbf{1.}} Since $\KK_{i+1}$ is positive-semidefinite, we infer that $\TT_{i}$ is invertible (see Remark~\ref{remTiinv}).

\textit{\textbf{2.}} Using \eqref{eqduhamelp} and \eqref{eqduhamelq} and $p(s_{i+1},u^*_h) = - ( \KK_{i+1} q( s_{i+1} , u^*_h) + \JJ_{i+1} )$, we get that
\begin{multline}\label{eq392bis}
p(s,u^*_h) = - \Big( Z(s_{i+1},s)^\top \KK_{i+1} Z(s_{i+1},s_i) + \int_s^{s_{i+1}} Z(\t,s)^\top W(\t) Z(\t,s_i) \; d\t \Big) q(s_i,u^*_h) \\
- \Big( Z(s_{i+1},s)^\top \KK_{i+1} \ZB_i + \int_s^{s_{i+1}} Z(\t,s)^\top W(\t) \int_{s_i}^\t Z(\t,\xi) B(\xi) \; d\xi \; d\t \Big) U^*_{h,i} \\
- \Big( Z(s_{i+1},s)^\top \KK_{i+1} \ZO_i \\ + \int_s^{s_{i+1}} Z(\t,s)^\top W(\t) \left( \int_{s_i}^\t Z(\t,\xi) \omega(\xi) \; d\xi - x(\t) \right) \; d\t + Z(s_{i+1},s)^\top \JJ_{i+1} \Big).
\end{multline}
Replacing $p(s,u^*_h)$ in \eqref{eq980} and applying the Fubini theorem, we obtain that
\begin{multline*}
\RR_i U^*_{h,i} = \RV_i - \Big( \ZB_i^\top \KK_{i+1} Z(s_{i+1},s_i) + \ZBWZ_i \Big) q(s_i,u^*_h) \\ - \Big( \ZB_i^\top \KK_{i+1} \ZB_i + \ZBWZB_i \Big)U^*_{h,i} - \Big( \ZB_i^\top \KK_{i+1} \ZO_i + \ZBWZOX_i + \ZB_i^\top \JJ_{i+1} \Big),
\end{multline*}
that is exactly $ \TT_i U^*_{h,i} = - \PP_i q (s_i , u^*_h) - \HH_i $. Since $\TT_i$ is invertible, we infer that
$$U^*_{h,i} = - {\TT}_i^{-1} \big( \PP_i q (s_i , u^*_h) + \HH_i \big).$$

\textit{\textbf{3.}} Taking $s=s_i$ in \eqref{eq392bis} leads to
\begin{multline*}
p(s_i,u^*_h) = - \Big( Z(s_{i+1},s_i)^\top \KK_{i+1} Z(s_{i+1},s_i) + \ZWZ_i \Big) q(s_i,u^*_h) \\
- \Big( Z(s_{i+1},s_i)^\top \KK_{i+1} \ZB_i + \ZBWZ_i^\top \Big) U^*_{h,i} \\ - \Big( Z(s_{i+1},s_i)^\top \KK_{i+1} \ZO_i + \ZWZOX_i + Z(s_{i+1},s_i)^\top \JJ_{i+1} \Big),
\end{multline*}
that is exactly $p(s_i,u^*_h) = - \QQ_i q(s_i,u^*_h) - \PP_i^\top U^*_{h,i} - \GG_i$. Since $U^*_{h,i} = - {\TT}_i^{-1} ( \PP_i q (s_i , u^*_h) + \HH_i )$, we infer that
\begin{equation*}
p(s_i,u^*_h) = - \big( \KK_{i} q(s_i,u^*_h) + \JJ_i \big).
\end{equation*}

\textit{\textbf{4.}} Let $y \in \R^n$. From Remarks~\ref{termdependant} and \ref{remreplace} and from the definition of $U_i(y)^*$, one can prove in a very similar way than Step 2. that $U_i(y)^*_i = - {\TT}_i^{-1} ( \PP_i y + \HH_i ) \in \R^m$. From Proposition~\ref{propfauxDPP}, we have
\begin{multline}\label{eq401}
\mathcal{V}_i (y) = \mathcal{V}_{i+1} \Big(  \Phi_{i,y} \big( U_i(y)^*_i \big) \Big) \\[7pt]
+  \dfrac{1}{2} \di \int_{s_i}^{s_{i+1}} \Big\langle W(\t) \big( q(\t,i,i+1,y, U_i(y)^*_i)-x(\t) \big) , q(\t,i,i+1,y, U_i(y)^*_i  ) -x(\t) \Big\rangle_n \\
+ \Big\langle R(\t) \big( U_i(y)^*_i  -v(\t) \big) ,U_i(y)^*_i  -v(\t) \Big\rangle_m \; d\t .
\end{multline}
From the induction hypothesis, we have
\begin{multline}\label{eq402}
\mathcal{V}_{i+1} \Big(  \Phi_{i,y} \big( U_i(y)^*_i \big) \Big)= \frac{1}{2} \Big\langle \KK_{i+1}  \Phi_{i,y} \big( U_i(y)^*_i \big) ,  \Phi_{i,y} \big( U_i(y)^*_i \big) \Big\rangle_n \\
+ \Big\langle \JJ_{i+1} ,  \Phi_{i,y} \big( U_i(y)^*_i \big) \Big\rangle_n + \frac{1}{2} \YY_{i+1} .
\end{multline}
On the other hand, it follows from the Duhamel formula that
\begin{equation}\label{eq403}
q(\t,i,i+1,y,U_i(y)^*_i) = Z(\t,s_i) y + \int_{s_i}^\t Z(\t,\xi) B(\xi) \; d\xi \; U_i(y)^*_i + \int_{s_i}^\t Z(\t,\xi) \omega(\xi) \; d\xi,
\end{equation}
for every $\t \in [s_i,s_{i+1}]$. Taking $\t = s_{i+1}$ in \eqref{eq403}, we get that
\begin{equation}\label{eq404}
\Phi_{i,y} \big( U_i(y)^*_i \big) = Z(s_{i+1},s_i) y + \ZB_i U_i(y)^*_i + \ZO_i .
\end{equation}
Finally, using \eqref{eq402}, \eqref{eq403} and \eqref{eq404} in \eqref{eq401} yields
$$ \mathcal{V}(i,y) = \left\langle \frac{1}{2} \TT_i U_i(y)^*_i + \PP_i y + \HH_i , U_i(y)^*_i \right\rangle_m + \left\langle \frac{1}{2} \QQ_i y + \GG_i , y \right\rangle_n +  \frac{1}{2} \FF_i. $$
Since $U_i(y)^*_i = - {\TT}_i^{-1} ( \PP_i y + \HH_i )$, we conclude that
$$ \mathcal{V}(i,y) = \frac{1}{2} \langle \KK_i y , y \rangle_n + \langle \JJ_i , y \rangle_n + \frac{1}{2} \YY_i .$$

\textit{\textbf{5.}} Let $y \in \R^n$ and let us consider temporarily the homogeneous Problem~$(\mathscr{P}_{\E_h})$, that is, let us consider temporarily that $q_b = x(t) = \omega (t) = 0_{\R^n}$ and $v(t)= 0_{\R^m}$ for every $t \in [a,b]$. From Remarks~\ref{cashomogene} and \ref{Kindependant}, similarly to Step 4, we get, in the homogeneous case, that $ \mathcal{V}_i(y) = \frac{1}{2} \langle \KK_i y , y \rangle_n \geq 0$. It follows that $\KK_i$ is positive-semidefinite.

\subsection{Proof of Theorem~\ref{thmmainresult2} based on the dynamic programming principle}\label{secsecondproof}

This section is dedicated to an alternative proof of Theorem~\ref{thmmainresult2}, based on the following dynamic programming principle.

\begin{proposition}[Dynamic programming principle]\label{propDPP}
For every $0 \leq j \leq N-2$ and every $y \in \R^n$, we have
\begin{multline*}
\mathcal{V}_j(y) = \inf_{U_j \in \R^m} \Big(  \mathcal{V}_{j+1} \big( \Phi_{j,y}(U_j) \big) \\
\qquad\qquad\qquad + \dfrac{1}{2} \di \int_{s_j}^{s_{j+1}} \Big( \Big\langle W(\t) \big( q(\t,j,j+1,y,U_j)-x(\t) \big) , q(\t,j,j+1,y,U_j) -x(\t) \Big\rangle_n \\
+ \Big\langle R(\t) \big( U_j-v(\t) \big) , U_j-v(\t) \Big\rangle_m \Big) d\t  \Big).
\end{multline*}
\end{proposition}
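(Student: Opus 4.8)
The plan is to obtain Proposition~\ref{propDPP} directly from the additive splitting of the cost recorded in Lemma~\ref{lemlem1}, by decomposing the infimum that defines $\mathcal{V}_j$ over the product space $\R^m \times (\R^m)^{N-(j+1)}$ into two nested infima. Fix $0 \leq j \leq N-2$ and $y \in \R^n$, and write a generic $U \in (\R^m)^{N-j}$ as $U = (U_j,\tilde U)$, with first block $U_j \in \R^m$ and tail $\tilde U = (U_i)_{i=j+1,\ldots,N-1} \in (\R^m)^{N-(j+1)}$, as in Section~\ref{secVF}. Lemma~\ref{lemlem1} then reads
$$ \overline{\mathcal{V}}_j(y,U) = \overline{\mathcal{V}}_{j+1}\big(\Phi_{j,y}(U_j),\tilde U\big) + \ell_j(y,U_j), $$
where $\ell_j(y,U_j)$ stands for the integral term appearing in Lemma~\ref{lemlem1}. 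The key structural observation is that $\ell_j(y,U_j)$ depends on $U$ only through its first block $U_j$: indeed, the restriction of $q(\cdot,j,j+1,y,U_j)$ to $[s_j,s_{j+1}]$ solves a Cauchy problem on $[s_j,s_{j+1}]$ driven by the constant value $U_j$ and the initial datum $y$, hence is determined by $(y,U_j)$ alone; for the same reason $\Phi_{j,y}(U_j) = q(s_{j+1},j,j+1,y,U_j)$ depends only on $(y,U_j)$.

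Next I would take the infimum over $U \in (\R^m)^{N-j}$ on both sides and invoke the elementary identity $\inf_{(a,b)} F(a,b) = \inf_a \inf_b F(a,b)$, which applies here because the admissible $U$ range over a full product of copies of $\R^m$ (the sampled-data structure constrains only the switching times, not the values). This gives
$$ \mathcal{V}_j(y) = \inf_{U_j \in \R^m}\ \inf_{\tilde U \in (\R^m)^{N-(j+1)}} \Big( \overline{\mathcal{V}}_{j+1}\big(\Phi_{j,y}(U_j),\tilde U\big) + \ell_j(y,U_j) \Big). $$
Since $\ell_j(y,U_j)$ does not depend on $\tilde U$, it factors out of the inner infimum, and by the very definition~\eqref{eqdefVF} of the value function at step $j+1$ one has $\inf_{\tilde U} \overline{\mathcal{V}}_{j+1}(\Phi_{j,y}(U_j),\tilde U) = \mathcal{V}_{j+1}\big(\Phi_{j,y}(U_j)\big)$. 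Substituting the explicit expression of $\ell_j(y,U_j)$ back in yields exactly the claimed formula.

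I do not expect a genuine obstacle: the argument is essentially bookkeeping, and the only points deserving an explicit line are (i) that the inner infimum is unconstrained over all of $(\R^m)^{N-(j+1)}$, consistent with Remark~\ref{remreplace}, and (ii) that $\mathcal{V}_{j+1}$ is evaluated at the state value $\Phi_{j,y}(U_j)$ reached at time $s_{j+1}$. If one preferred to avoid the nested-infimum identity, the two inequalities could be proved separately — "$\leq$" by plugging into $\overline{\mathcal{V}}_j$, for an arbitrary $U_j$, a minimizing tail $U_{j+1}(\Phi_{j,y}(U_j))^*$ for $\mathcal{V}_{j+1}(\Phi_{j,y}(U_j))$ (which exists and is unique by the discussion following~\eqref{eqdefVF}), and "$\geq$" by restricting a minimizer $U_j(y)^*$ of $\mathcal{V}_j(y)$ and using Lemma~\ref{lemlem1} together with $\overline{\mathcal{V}}_{j+1}(\cdot,\cdot) \geq \mathcal{V}_{j+1}(\cdot)$ — but the nested-infimum formulation delivers both directions at once.
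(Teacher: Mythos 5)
Your proposal is correct and follows essentially the same route as the paper: both apply Lemma~\ref{lemlem1} to split $\overline{\mathcal{V}}_j(y,U)$ into $\overline{\mathcal{V}}_{j+1}(\Phi_{j,y}(U_j),\tilde U)$ plus a running-cost term depending only on $(y,U_j)$, and then use the nested-infimum identity $\inf_{(\mu_1,\mu_2)}\Psi=\inf_{\mu_1}\inf_{\mu_2}\Psi$ together with the definition~\eqref{eqdefVF} of $\mathcal{V}_{j+1}$ to conclude. No gaps.
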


\begin{proof}
Let $0 \leq j \leq N-2$ and $y \in \R^n$. From \eqref{eqdefVF} and Lemma~\ref{lemlem1}, it is clear that $\mathcal{V}_j(y)$ is equal to the infimum of
\begin{multline*}
\overline{\mathcal{V}}_{j+1} \big( \Phi_{j,y}(U_j) , \tilde{U} \big) 
+ \dfrac{1}{2} \di \int_{s_j}^{s_{j+1}} \Big( \Big\langle W(\t) \big( q(\t,j,j+1,y, U_j )-x(\t) \big) , q(\t,j,j+1,y, U_j ) -x(\t) \Big\rangle_n \\
+ \Big\langle R(\t) \big( U_j -v(\t) \big) ,U_j -v(\t) \Big\rangle_m \Big) d\t ,
\end{multline*}
over all possible $U = (U_j,\tilde{U}) \in \R^m \times (\R^m)^{N-(j+1)}$. Using that
\begin{equation}\label{eq8795}
\inf_{(\mu_1,\mu_2) \in \Gamma_1 \times \Gamma_2} \Psi (\mu_1,\mu_2) = \inf_{\mu_1 \in \Gamma_1} \left( \inf_{\mu_2 \in \Gamma_2} \Psi (\mu_1,\mu_2) \right) ,
\end{equation}
for any function $\Psi : \Gamma_1 \times \Gamma_2 \rightarrow \R$ and any couple $(\Gamma_1,\Gamma_2)$ of nonempty sets, we conclude the proof by applying \eqref{eq8795} to $\mathcal{V}_j(y)$ (separating the variables $U_j$ and $\tilde{U}$).
\end{proof}

In order to prove Theorem~\ref{thmmainresult2}, we will prove by backward induction that the following four statements are true:
\begin{enumerate}
\item $\TT_i$ is invertible;
\item $U^*_{h,i} = - {\TT}_i^{-1} ( \PP_i q (s_i , u^*_h) + \HH_i) $;
\item $\mathcal{V}(i,y) = \frac{1}{2} \langle \KK_i y , y \rangle_n + \langle \JJ_i , y \rangle_n + \frac{1}{2} \YY_i$ for all $y \in \R^n$;
\item $\KK_{i}$ is positive-semidefinite;
\end{enumerate}
for every $i=N-1,\ldots,0$.

Let us first recall the following equality that follows from the Duhamel formula:
\begin{equation}\label{eqduhamelq2}
q(\t,i,i+1,y,U_i) = Z(\t,s_i) y + \int_{s_i}^\t Z(\t,\xi) B(\xi) \; d\xi \; U_i + \int_{s_i}^\t Z(\t,\xi) \omega(\xi) \; d\xi ,
\end{equation}
for every $0 \leq i \leq N-1$, every $(y,U_i) \in \R^n \times \R^m$ and every $\t \in [s_i,s_{i+1}]$. Taking $0 \leq i \leq N-2$ and $\t = s_{i+1}$ in \eqref{eqduhamelq2}, we get that
\begin{equation}\label{eqduhamelq22}
\Phi_{i,y}(U_i) = Z(s_{i+1},s_i) y + \ZB_i U_i + \ZO_i
\end{equation}

\paragraph{Initialization of the backward induction.}
Let $i=N-1$.

\textit{\textbf{1.}} Since $\KK_{i+1} = S$ is positive-semidefinite, we infer that $\TT_{i}$ is invertible (see Remark~\ref{remTiinv}).

\textit{\textbf{2.}} Let $y \in \R^n$. Using \eqref{eqduhamelq2} in \eqref{eqdefvbar}, we get that
\begin{multline*}
\overline{\mathcal{V}}_i(y,U_i) = \dfrac{1}{2} \left\langle  \Big( \ZB_i^\top S \ZB_i + \ZBWZB_i + \RR_i \Big)U_i , U_i\right\rangle_m \\
+ \left\langle  \Big( \ZB_i^\top {S} Z(s_{i+1},s_i) + \ZBWZ_i \Big)y + \Big( \ZB_i^\top {S} \ZO_i + \ZBWZOX_i - \RV_i \Big) , U_i \right\rangle_m \\
+ \left\langle \dfrac{1}{2} \Big( Z(s_{i+1},s_i)^\top S Z(s_{i+1},s_i) + \ZWZ_i \Big) y + \Big( Z(s_{i+1},s_i)^\top {S} \ZO_i + \ZWZOX_i \Big) , y \right\rangle_n \\
+ \dfrac{1}{2} \Big( \langle S \ZO_i ,  \ZO_i \rangle_n + \WZOX_i^2 + \RV_i^2 \Big) ,
\end{multline*}
for every $U_i \in \R^m$. Hence we have exactly obtained that
\begin{equation}\label{eq098}
\mathcal{V}_i(y) = \inf_{U_i \in \R^m} \overline{\mathcal{V}}_i(y,U_i)  = \inf_{U_i \in \R^m} \left( \left\langle \frac{1}{2} \TT_i U_i + \PP_i y + \HH_i , U_i \right\rangle_m + \left\langle \frac{1}{2} \QQ_i y + \GG_i , y \right\rangle_n +  \frac{1}{2} \FF_i \right).
\end{equation}
Differentiating the above expression with respect to $U_i$, the infimum $\mathcal{V}_i(y)$ is reached at $U_i(y)^* \in \R^m$ that satisfies $ {\TT}_i U_i(y)^* + \PP_i y + \HH_i = 0_{\R^m}$. Since $\TT_i$ is invertible, we deduce that $ U _i(y)^* = - {\TT}_i^{-1} ( \PP_i y + \HH_i) $. Finally, taking $y = q(s_i,u^*_h)$, we obtain from Remark~\ref{rem879} that
$$ U^*_{h,i} =  U_i \big( q(s_i,u^*_h) \big)^* = - {\TT}_i^{-1} \big( \PP_i q(s_i,u^*_h) + \HH_i \big) .$$

\textit{\textbf{3.}} Let $y \in \R^n$. Replacing $U_i$ by $U_i(y)^* = - {\TT}_i^{-1} ( \PP_i y + \HH_i)$ in the expression of~\eqref{eq098}, we get that
$$ \mathcal{V}_i(y)  = \dfrac{1}{2} \left\langle \KK_i y , y \right\rangle_n + \left\langle \JJ_i , y \right\rangle_n + \dfrac{1}{2} \YY_i .$$

\textit{\textbf{4.}} Let $y \in \R^n$ and let us consider temporarily the homogeneous Problem~$(\mathscr{P}_{\E_h})$, that is, let us consider temporarily that $q_b = x(t) = \omega (t) = 0_{\R^n}$ and $v(t)= 0_{\R^m}$ for every $t \in [a,b]$. From Remarks~\ref{cashomogene} and \ref{Kindependant}, similarly to Step 3, we get, in the homogeneous case, that $ \mathcal{V}_i(y) = \frac{1}{2} \langle \KK_i y , y \rangle_n \geq 0$. It follows that $\KK_i$ is positive-semidefinite.

\paragraph{Induction step.}
Let $i \in \{ 0, \ldots ,N-2 \}$ and let us assume that the four statements are satisfied at steps $i+1, \ldots ,N-1$.

\textit{\textbf{1.}} Since $\KK_{i+1}$ is positive-semidefinite, we infer that $\TT_{i}$ is invertible (see Remark~\ref{remTiinv}).

\textit{\textbf{2.}} Let $y \in \R^n$. From the dynamic programming principle stated in Proposition~\ref{propDPP}, we obtain that $\mathcal{V}_i(y)$ is equal to the infimum of
\begin{multline}\label{eq099}
\mathcal{V}_{i+1} \big( \Phi_{i,y}(U_i) \big)
+ \dfrac{1}{2} \di \int_{s_i}^{s_{i+1}} \Big( \Big\langle W(\t) \big( q(\t,i,i+1,y,U_i)-x(\t) \big) , q(\t,i,i+1,y,U_i) -x(\t) \Big\rangle_n \\
+ \Big\langle R(\t) \big( U_i-v(\t) \big) , U_i-v(\t) \Big\rangle_m \Big) d\t
\end{multline}
over $U_i \in \R^m$. From the induction hypothesis, we have
\begin{equation}\label{eq199}
\mathcal{V}_{i+1} \big( \Phi_{i,y}(U_i) \big)  =  \dfrac{1}{2} \big\langle \KK_{i+1} \Phi_{i,y}(U_i) , \Phi_{i,y}(U_i) \big\rangle_n + \big\langle \JJ_{i+1} , \Phi_{i,y}(U_i) \big\rangle_n + \dfrac{1}{2} \YY_{i+1} .
\end{equation}
Using \eqref{eqduhamelq2}, \eqref{eqduhamelq22} and \eqref{eq199} in \eqref{eq099}, we obtain that
\begin{equation}\label{eq299}
\mathcal{V}_i(y) = \inf_{U_i \in \R^m} \left( \left\langle \frac{1}{2} \TT_i U_i + \PP_i y + \HH_i , U_i \right\rangle_m + \left\langle \frac{1}{2} \QQ_i y + \GG_i , y \right\rangle_n +  \frac{1}{2} \FF_i \right).
\end{equation}
Differentiating the above expression with respect to $U_i$, the infimum $\mathcal{V}_i(y)$ is reached in $U_i(y)^*_i \in \R^m$ that satisfies $ {\TT}_i U_i(y)^*_i + \PP_i y + \HH_i = 0_{\R^m}$. Since $\TT_i$ is invertible, we infer that $ U_i(y)^*_i = - {\TT}_i^{-1} ( \PP_i y + \HH_i) $. Finally, taking $y = q(s_i,u^*_h)$, we obtain from Remark~\ref{rem879} that
$$ U^*_{h,i} =  U_i \big( q(s_i,u^*_h) \big)^*_i = - {\TT}_i^{-1} \big( \PP_i q(s_i,u^*_h) + \HH_i \big)  .$$

\textit{\textbf{3.}} Let $y \in \R^n$. Replacing $U_i$ by $U_i(y)^*_i = - {\TT}_i^{-1} ( \PP_i y + \HH_i)$ in \eqref{eq299}, we get that
$$ \mathcal{V}_i(y)  = \dfrac{1}{2} \left\langle \KK_i y , y \right\rangle_n + \left\langle \JJ_i , y \right\rangle_n + \dfrac{1}{2} \YY_i .$$

\textit{\textbf{4.}} Let $y \in \R^n$ and let us consider temporarily the homogeneous Problem~$(\mathscr{P}_{\E_h})$, that is, let us consider temporarily that $q_b = x(t) = \omega (t) = 0_{\R^n}$ and $v(t)= 0_{\R^m}$ for every $t \in [a,b]$. From Remarks~\ref{cashomogene} and \ref{Kindependant}, similarly to Step 3, we get, in the homogeneous case, that $ \mathcal{V}_i(y) = \frac{1}{2} \langle \KK_i y , y \rangle_n \geq 0$. It follows that $\KK_i$ is positive-semidefinite.

\section{Conclusion}
We have extended the Riccati theory for general linear-quadratic optimal control problems with sampled-data controls, by two approaches. The first approach consists of applying an appropriate version of the Pontryagin maximum principle, which is adapted to optimal sampled-data control problems, and of showing that the costate can be expressed linearly in function of the state, by introducing an adequate version of the Riccati equation. The second approach relies on an appropriate version of the dynamic programming principle, combined with backward induction arguments.

We have also proved that the optimal sampled-data controls converge pointwise to the optimal permanent control as the sampling periods tend to zero.


As an open problem, it is natural to raise these questions for more general optimal control problems, having nonlinear dynamics and possibly involving constraints on the final state.

When dealing with nonlinear dynamics, the Riccati equation becomes the Hamilton-Jacobi equation, which is a first-order partial differential equation of which viscosity solutions are nonsmooth in general. A first open issue would be to investigate the dynamic programming principle, and the Hamilton-Jacobi equation, in such a nonlinear context with sampled-data controls.

When considering constraints on the final state, even in the linear-quadratic case the situation is more involved, and establishing a convergence result like Theorem \ref{thmmainresult} may already be challenging and will certainly require to consider finer concepts like singular trajectories or abnormal extremals, and conjugate point theory (see \cite{agrach,Bon-Chy03}).

\bigskip

\noindent{\bf Acknowledgment.}
The second author was partially supported by the Grant FA9550-14-1-0214 of the EOARD-AFOSR.

\end{document}